\newtheorem{thm}{Theorem}
\newtheorem{prop}{Proposition}
\newtheorem{lem}{Lemma}
\newtheorem{rmk}{Remark}
\newtheorem{defn}{Definition}
\begin{document}

\begin{center}
\Large{Test of  Independence for High-dimensional Random Vectors Based on Block Correlation Matrices}
\end{center}

\begin{center}
Zhigang Bao\\
Division of Mathematical Sciences, Nanyang Technological University\\
 Singapore, 637371\\ 
email:\texttt{zgbao@ntu.edu.sg}\\ \vspace{0.2in}
Jiang Hu\\
School of Mathematics and Statistics, Northeast Normal
University\\ China, 130024 \\ email: \texttt{huj156@nenu.edu.cn}\\  \vspace{0.2in}
Guangming Pan\\
Division of Mathematical Sciences, Nanyang Technological University\\  Singapore, 637371 \\ email: \texttt{gmpan@ntu.edu.sg}\\ \vspace{0.2in}
Wang Zhou\\
Department of Statistics and Applied Probability, National University of Singapore,\\
Singapore, 117546\\ email:\texttt{stazw@nus.edu.sg}
\end{center}

\newpage
%
\begin{center}
\textbf{Abstract}
\end{center}
In this paper, we are concerned with the independence test for $k$ high-dimensional sub-vectors of a normal vector, with fixed positive integer $k$. A natural high-dimensional extension of the classical sample correlation matrix, namely block correlation matrix, is raised for this purpose. We then construct the so-called Schott type statistic as our test statistic, which turns out to be a particular linear spectral statistic of the block correlation matrix. Interestingly, the limiting behavior of the  Schott type statistic can be figured out with the aid of the Free Probability Theory and the Random Matrix Theory. Specifically, we will bring the so-called real second order freeness for Haar distributed orthogonal matrices, derived in \cite{MP2013},  into the framework of this high-dimensional testing problem. Our  test does not  require the sample size to be larger than the total or any partial sum of the dimensions of the $k$ sub-vectors. Simulated results show the effect of the Schott type statistic,  in contrast to  those of the statistics proposed in \cite{JY2013} and \cite{JBZ2013}, is satisfactory.  Real data analysis is also used to illustrate  our method.
\vspace*{.3in}

\noindent\textsc{Keywords}: {Block correlation matrix; Independence test; High dimensional data; Schott type statistic; Second order freeness; Haar distributed orthogonal matrices; Central limit theorem; Random matrices.
}

\newpage

\section{Introduction}
Test of independence for random variables is a very classical hypothesis testing problem, which dates back to the seminal work by \cite{Pearson00}, followed by a huge literature regarding this topic and its variants. One frequently recurring variant is the test of independence for $k$ random vectors, where $k\geq 2$ is an integer. Comprehensive overview and detailed references on this problem can be found
 in most of the textbooks on multivariate statistical analysis. For instance, here we recommend the masterpieces by \cite{Mu1982}  and by \cite{Anderson03I} for more details, in the low-dimensional case. However, due to the increasing demand in the analysis of big data springing up in various fields nowadays, such as genomics, signal processing, microarray, proteomics and finance, the investigation on a high-dimensional extension of this testing problem is much needed, which motivates us to propose a feasible way for it in this work.
%

Let us take a review more specifically on some representative existing results in the literature, after necessary notation is introduced. For simplicity, henceforth, we will use the notation $\llbracket m\rrbracket$ to denote the set $\{1,2,\ldots,m\}$ for any positive integer $m$.
Assume that $\mathbf{x}=(\mathbf{x}_1',\ldots, \mathbf{x}_k')'$ is a $p$-dimensional normal vector, in which $\mathbf{x}_i$ possesses dimension $p_i$ for $i\in\llbracket k\rrbracket$, such that $p_1+\cdots+p_k=p$. Denote by $\boldsymbol{\mu}_i$ the mean vector of the $i$th sub-vector $\mathbf{x}_i$ and by $\Sigma_{ij}$ the cross covariance matrix of $\mathbf{x}_i$ and $\mathbf{x}_j$ for all $i,j\in\llbracket k\rrbracket$. Then $\boldsymbol{\mu}:=(\boldsymbol{\mu}_1',\ldots,\boldsymbol{\mu}_k')'$ and $\Sigma:=(\Sigma_{ij})_{k,k}$ are the mean vector and covariance matrix of $\mathbf{x}$ respectively. In this work, we consider the following hypothesis testing
\begin{eqnarray*}
(\mathbf{T1})\hspace{10ex}\mathbf{H}_0: \Sigma_{ij}=0, \quad i\neq j \hspace{5ex} \text{v.s.} \hspace{5ex} \mathbf{H}_1: \text{ not } \mathbf{H}_0.
\end{eqnarray*}
To this end, we draw $n$ observations of $\mathbf{x}$, namely $\mathbf{x}(1),\ldots,\mathbf{x}(n)$.  In addition, the $i$th sub-vector of $\mathbf{x}(j)$ will be denoted by $\mathbf{x}_i(j)$ for all $i\in \llbracket k\rrbracket$ and $j\in\llbracket n\rrbracket$. Hence, $\mathbf{x}_i(j),j\in\llbracket n\rrbracket$ are $n$ independent observations of $\mathbf{x}_i$. Conventionally, the corresponding sample means will be written as $\bar{\mathbf{x}}=n^{-1}\sum_{j=1}^n\mathbf{x}(j)$ and $\bar{\mathbf{x}}_i=n^{-1}\sum_{j=1}^n\mathbf{x}_i(j)$. With the observations at hand, we can construct the sample covariance matrix  as usual. Set
\begin{eqnarray}
\mathbf{X}:=[\mathbf{x}(1)-\bar{\mathbf{x}},\cdots, \mathbf{x}(n)-\bar{\mathbf{x}}],\quad \mathbf{X}_i:=[\mathbf{x}_i(1)-\bar{\mathbf{x}}_i,\cdots, \mathbf{x}_i(n)-\bar{\mathbf{x}}_i], \quad i\in\llbracket k\rrbracket. \label{3193}
\end{eqnarray}
The sample covariance matrix of $\mathbf{x}$ and the cross sample covariance matrix of $\mathbf{x}_{m}$ and $\mathbf{x}_{\ell}$ will be denoted by $\hat{\Sigma}$ and $\hat{\Sigma}_{m\ell}$ respectively, to wit,
\begin{eqnarray*}
\hat{\Sigma}:=\frac{1}{n-1}\mathbf{X}\mathbf{X}' ,\hspace{8ex} \hat{\Sigma}_{m\ell}:=\frac{1}{n-1}\mathbf{X}_m\mathbf{X}_\ell'.
\end{eqnarray*}

In the classical {\emph{large $n$ and fixed $p$}} case, the likelihood ratio statistic $\Lambda_n:=W_n^{n/2}$  with
\begin{eqnarray*}
W_n:=\frac{|\hat{\Sigma}|}{\prod_{i=1}^k|\hat{\Sigma}_{ii}|}
\end{eqnarray*}
is a favorable one aiming at the testing problem ({\bf{T1}}).
A celebrated limiting law on $\Lambda_n$ under $\mathbf{H}_0$ is
\begin{eqnarray}
-2\kappa\log \Lambda_n\Longrightarrow\chi_\rho^2,\quad \text{as  }n\to\infty  \label{3191}
\end{eqnarray}
where
\begin{eqnarray*}
\kappa=1-\frac{2(p^3-\sum_{i=1}^k p_i^3)+9(p^2-\sum_{i=1}^k p_i^2)}{6n(p^2-\sum_{i=1}^k p_i^2)},\quad
\rho=\frac12(p^2-\sum_{i=1}^kp_i^2).
\end{eqnarray*}
One can refer to \cite{Wilks1935} or Theorem 11.2.5 of \cite{Mu1982}, for instance. The left hand side of (\ref{3191}) is known as the Wilks statistic.

Now, we turn to the high-dimensional setting of interest. A commonly used assumption on dimensionality and sample size in the Random Matrix Theory ({\bf{RMT}}) is that $p$ is proportionally large as $n$, i.e.
\begin{eqnarray*}
p:=p(n),\quad \frac{p}{n}\to y\in(0,\infty),\text{ as } n\to \infty.
\end{eqnarray*}
To employ the existing {\bf{RMT}} apparatus in the sequel, hereafter, we will always work with the above {\emph{large $n$ and proportionally large $p$}} setting. This time, resorting to the likelihood ratio statistic in (\ref{3191}) directly is obviously infeasible,  since  the limiting law (\ref{3191}) is invalid when $p$ tends to infinity along with $n$. Actually, under this setting, the likelihood ratio statistic can still be employed if an appropriate renormalization is performed priori.  In \cite{JY2013}, the authors renormalized the likelihood ratio statistic, and derived its limiting law under $\mathbf{H}_0$ as a central limit theorem (CLT), under the restriction of
\begin{eqnarray}
n>p+1,\quad p_i/n\to y_i\in(0,1), i\in \llbracket k\rrbracket,  \text{ as } n\to \infty. \label{052206}
\end{eqnarray}
One can refer to Theorem 2 in \cite{JY2013} for the details of the normalization and the CLT. One similar result was also obtained in \cite{JBZ2013}, see Theorem 4.1 therein.
However, the condition (\ref{052206}) is indispensable for the likelihood ratio statistic which will inevitably hit the wall when $p$ is even larger than $n$, owing to the fact that $\log|\hat{\Sigma}|$ is not well-defined in this situation.  In addition, in \cite{JBZ2013}, another test statistic constructed from the traces of F-matrices, the so-called {\emph{trace criterion test}}, was proposed. Under $\mathbf{H}_0$, a CLT was derived for this statistic under the following restrictions
\begin{eqnarray}
\frac{p_i}{p_1+\cdots+p_{i-1}}\to r_1^{(i)}\in(0,\infty),\quad \frac{p_i}{n-1-(p_1+\cdots+p_{i-1})}\to r_2^{(i)}\in(0,1), \label{052207}
\end{eqnarray}
for all $i\in\llbracket k\rrbracket$, together with $p-p_1<n$. We stress here, condition (\ref{052207}) is obviously much stronger than $p_i/n\to y_i\in (0,1)$ for all $i\in\llbracket k \rrbracket$.

Roughly speaking, our aim in this paper is to raise a new statistic with both statistical visualizability and mathematical tractability, whose limiting behavior can be derived with the following restriction on the dimensionality and the sample size
\begin{eqnarray}
p_i:=p_i(n),\quad p_i/n\to y_i\in(0,1),\quad i\in\llbracket k\rrbracket,\quad \text{ as } n\to \infty. \label{3192}
\end{eqnarray}
Especially, $n$ is not required to be larger than $p$ or any partial sum of $p_i$. More precisely, our multi-facet target consists of the following:
\begin{itemize}
\item Introducing a new matrix model tailored to ({\bf{T1}}), namely {\emph{block correlation matrix}}, which can be viewed as a natural high-dimensional extension of the sample correlation matrix.
\item Constructing the so-called {\emph{Schott type statistic}} from the block correlation matrix, which can be regarded as an extension of Schott's statistic for complete independence test in \cite{S2005}.
\item Deriving the limiting distribution of the Schott type statistic with the aid of tools from the Free Probability Theory ({\bf{FPT}}). Specifically, we will channel the so-called {\emph{real second order freeness}}  for Haar distributed orthogonal matrices from \cite{MP2013} into the framework.
\item Employing this limiting law to test independence of $k$ sub-vectors under (\ref{3192}) and  assessing the statistic via simulations and a real data set,  which  comes from the daily returns of 258 stocks issued by the companies from S\&P 500.
\end{itemize}
It will be seen that for ({\bf{T1}}), it is quite natural and reasonable to put forward the concept of block correlation matrix. Just like that the sample correlation matrix is designed to bypass the unknown mean values and variances of entries, the block correlation matrix can also be employed, without knowing the population mean vectors $\boldsymbol{\mu}_i$ and covariance matrices $\Sigma_{ii}$, $i\in\llbracket k\rrbracket$. Also interestingly, it turns out that, the statistical meaning of the Schott type statistic is rooted in the idea of testing independence based on the Canonical Correlation Analysis ({\bf{CCA}}). Meanwhile, theoretically, the Schott type statistic is a special type of linear spectral statistic from the perspective of {\bf{RMT}}. Then methodologies from {\bf{RMT}} and {\bf{FPT}} can take over the derivation of the limiting behavior of the proposed statistic.
As far as we know, the application of {\bf{FPT}} in high-dimensional statistical inference is still in its infancy. We also hope this work can evoke more applications of {\bf{FPT}} in statistical inference in the future. One may refer to \cite{RMSE2008} for another application of {\bf{FPT}}, in the context of statistics.

Our paper is organized as follows. In Section 2, we will construct the block correlation matrix. Then we will present the definition of the Schott type statistic and its limiting law in Section 3. Meanwhile, we will discuss the statistical rationality of this test statistic, especially the relationship with {\bf{CCA}}. In Section 4, we will detect the utility of our statistic by simulation, and an example about stock prices will be analyzed in Section 5. Finally, Section 6 will be devoted to the illustration of how {\bf{RMT}} and {\bf{FPT}} machinery can help to establish the limiting behavior of our statistic, and some calculations will be presented in the Appendix.

Throughout the paper, $\text{tr}\mathbf{A}$ represents the trace of a square matrix $\mathbf{A}$. If $\mathbf{A}$ is $N\times N$, we will use $\lambda_1(\mathbf{A}),\ldots,\lambda_N(\mathbf{A})$ to denote its $N$ eigenvalues. Moreover, $|\mathbf{A}|$ means the determinant of $\mathbf{A}$. In addition, $\mathbf{0}_{M\times N}$ will be used to denote the $M\times N$ null matrix, and be abbreviated to $\mathbf{0}$ when there is no confusion on the dimension. Moreover, for random objects $\xi$ and $\eta$, we use $\xi\stackrel{d}=\eta$ to represent that $\xi$ and $\eta$ share the same distribution.
\section{Block correlation matrix}
An elementary property of the sample correlation matrix is that it is invariant under translation and scaling of variables. Such an advantage allows us to discuss the limiting behavior of statistics constructed from the sample correlation matrix without knowing the means and variances of the involved variables, thus makes it a favorable choice in dealing with the real data. Now we are in a similar situation, without knowing explicit information of the population mean vectors $\boldsymbol{\mu}_i$ and covariance matrices $\Sigma_{ii}$, we want to construct a test statistic which is independent of these unknown parameters, in a similar vein. The first step is to propose a high-dimensional extension of sample correlation matrix, namely the block correlation matrix. For simplicity, we use the notation
\begin{eqnarray*}
\text{diag}(\mathbf{A}_i)_{i=1}^k=\left(
\begin{array}{ccc}
\mathbf{A}_1 &~ &~\\
~ &\ddots &~\\
~ &~ &\mathbf{A}_k
\end{array}
\right)
\end{eqnarray*}
to denote the diagonal block matrix with blocks $\mathbf{A}_i, i\in\llbracket k \rrbracket$, i.e. all off-diagonal blocks are $\mathbf{0}$.
\begin{defn}[(Block correlation matrix)]
With the aid of the notation in (\ref{3193}), the block correlation matrix $\mathbf{B}:=\mathbf{B}(\mathbf{X}_1,\cdots,\mathbf{X}_k)$ is defined as follows
\begin{eqnarray*}
\mathbf{B}&:=&\text{diag}((\mathbf{X}_i\mathbf{X}'_i)^{-\frac12})_{i=1}^k\cdot \mathbf{X}\mathbf{X}'\cdot \text{diag}((\mathbf{X}_i\mathbf{X}'_i)^{-\frac12})_{i=1}^k\nonumber\\\\
&=& \left((\mathbf{X}_i\mathbf{X}'_i)^{-1/2}\mathbf{X}_i\mathbf{X}'_j(\mathbf{X}_j\mathbf{X}'_j)^{-1/2}\right)_{i,j=1}^k.
\end{eqnarray*}
\end{defn}
\begin{rmk} Note that when $p_i=1$ for all $i\in\llbracket k\rrbracket$, $\mathbf{B}$ is reduced to the classical sample correlation matrix of $n$ observations of a $k$-dimensional random vector. In this sense, we can regard $\mathbf{B}$ as a natural high-dimensional extension of the sample correlation matrix.
\end{rmk}
\begin{rmk} If we take determinant of $\mathbf{B}$, we can get the likelihood ratio statistic. However, since one needs to further take logarithm on the determinant, the assumption $n>p+2$ is indispensable, in light of \cite{JY2013}.
\end{rmk}
In the sequel, we perform a very standard and well known transformation for $\mathbf{X}$ to eliminate the inconvenience caused by subtracting the sample mean. Set the orthogonal matrix
\begin{eqnarray*}
A=\left(
\begin{array}{cccccc}
\frac{1}{\sqrt{n}} &\frac{1}{\sqrt{n}} &\frac{1}{\sqrt{n}} &\cdots &\frac{1}{\sqrt{n}}\\
\frac{1}{\sqrt{2}} &-\frac{1}{\sqrt{2}} &0 &\cdots &0\\
\frac{1}{\sqrt{3\cdot2}} &\frac{1}{\sqrt{3\cdot 2}} &-\frac{2}{\sqrt{3\cdot2}} &\cdots &0\\
\cdots &\cdots &\cdots &\cdots &\cdots\\
\frac{1}{\sqrt{n(n-1)}} &\frac{1}{\sqrt{n(n-1)}} &\frac{1}{\sqrt{n(n-1)}} &\cdots &-\frac{n-1}{\sqrt{n(n-1)}}
\end{array}
\right).
\end{eqnarray*}
Then we can find that there exist i.i.d. $\mathbf{z}(j)\sim N(0,\Sigma),j\in\llbracket n-1\rrbracket$, such that
\begin{eqnarray*}
\mathbf{X}A':=(\mathbf{0}, \mathbf{z}(1),\cdots, \mathbf{z}(n-1) ),
\end{eqnarray*}
Analogously, we denote
\begin{eqnarray*}
\mathbf{X}_iA':=(\mathbf{0}, \mathbf{z}_i(1),\cdots, \mathbf{z}_i(n-1) ),\quad i\in\llbracket k\rrbracket.
\end{eqnarray*}
Obviously, $\mathbf{z}(j)=(\mathbf{z}_1'(j),\cdots, \mathbf{z}_k'(j))'$.
To abbreviate, we further set the matrices
\begin{eqnarray*}
\mathbf{Z}=(\mathbf{z}(1),\cdots, \mathbf{z}(n-1)),\quad \mathbf{Z}_i=(\mathbf{z}_i(1),\cdots, \mathbf{z}_i(n-1)),\quad i\in\llbracket k\rrbracket.
\end{eqnarray*}
Apparently, we have $\mathbf{Z}\mathbf{Z}'=\mathbf{X}\mathbf{X}'$ and $\mathbf{Z}_i\mathbf{Z}_i'=\mathbf{X}_i\mathbf{X}'_i$. Consequently, we can also write
\begin{eqnarray*}
\mathbf{B}
&=&\text{diag}(\mathbf{Z}_i\mathbf{Z}_i')^{-\frac12})_{i=1}^k\cdot \mathbf{Z}\mathbf{Z}' \cdot
\text{diag}(\mathbf{Z}_i\mathbf{Z}_i')^{-\frac12})_{i=1}^k\nonumber\\\\
&=& \left((\mathbf{Z}_i\mathbf{Z}_i')^{-1/2}\mathbf{Z}_i\mathbf{Z}_j'(\mathbf{Z}_j\mathbf{Z}_j')^{-1/2}\right)_{i,j=1}^k.
\end{eqnarray*}
An advantage of $\mathbf{Z}_i$ over $\mathbf{X}_i$ is that its entries are i.i.d.

\section{Schott type statistic and main result}
With the block correlation matrix at hand, we can propose our test statistic for ($\mathbf{T1}$), namely the Schott type statistic. Such a nomenclature is motivated by \cite{S2005} on another classical independence test problem, the so-called {\emph{complete independence test}}, which can be described as follows. Given a random vector $\mathbf{w}=(w_1,\ldots, w_p)'$, we consider the following hypothesis testing:
\begin{eqnarray*}
(\mathbf{T2})\hspace{5ex}\tilde{\mathbf{H}}_0: w_1,\cdots, w_p \text{ are completely independent} \hspace{5ex} \text{v.s.} \hspace{5ex} \tilde{\mathbf{H}}_1: \text{ not } \tilde{\mathbf{H}}_0.
\end{eqnarray*}
When $\mathbf{w}$ is multivariate normal, the above test problem is equivalent to the so-called test of sphericity of population correlation matrix, i.e.  under the null hypothesis, the population correlation matrix of $\mathbf{w}$ is $\mathbf{I}_p$. There is a long list of references devoted to testing complete independence for a random vector under the high-dimensional setting. See, for example, \cite{Johnstone2001}, \cite{LW2002}, \cite{Srivastava05S}, \cite{CJ2011},  \cite{BJYZ2009} and \cite{S2005}. Especially, in \cite{S2005}, the author constructed a statistic from the sample correlation matrix. To be specific, denote the sample correlation matrix of $n$ i.i.d. observations of $\mathbf{w}$ by $\mathbf{R}=\mathbf{R}(n,p):=(r_{ij})_{p\times p}$. Schott's statistic for ($\mathbf{T2}$) is then defined as follows
\begin{eqnarray*}
\boldsymbol{s}(\mathbf{R}):=\sum_{i=2}^p\sum_{j=1}^{i-1} r_{ij}^2=\frac{1}{2}(\sum_{i,j=1}^pr_{ij}^2-p)=\frac{1}{2}\text{tr}\mathbf{R}^2-\frac{p}{2}.
\end{eqnarray*}
Note that under the null hypothesis, all off-diagonal entries of the population correlation matrix should be $0$. Hence, it is quite natural to use the summation of squares of all off-diagonal entries to measure the difference between the population correlation matrix and $I$. Then Schott's statistic $\mathbf{s}(\mathbf{R})$ is just the sample counterpart of such a measurement.
 Now in a similar vein, we define the {\emph{Schott type statistic}} for the block correlation matrix as follows.
\begin{defn}[(Schott type statistic)] We define the Schott type statistic of the block correlation matrix $\mathbf{B}$ by
\begin{eqnarray*}
\boldsymbol{s}(\mathbf{B}):=\frac{1}{2}\text{tr}\mathbf{B}^2-\frac{p}{2}=\frac12\sum_{\ell=1}^p\lambda_\ell^2(\mathbf{B})-\frac{p}{2}.
\end{eqnarray*}
\end{defn}
For simplicity, we introduce the matrix
\begin{eqnarray*}
\mathbf{C}(i,j):=(\mathbf{X}_i\mathbf{X}'_i)^{-1/2}\mathbf{X}_i\mathbf{X}'_j(\mathbf{X}_j\mathbf{X}'_j)^{-1}\mathbf{X}_j\mathbf{X}_i'(\mathbf{X}_i\mathbf{X}'_i)^{-1/2}.
\end{eqnarray*}
Stemming from the definition of $\mathbf{B}$, we can easily get
\begin{eqnarray*}
\boldsymbol{s}(\mathbf{B})=\frac12\sum_{i,j=1}^k \text{tr}\mathbf{C}(i,j)-\frac{p}{2}=\sum_{i=2}^p\sum_{j=1}^{i-1} \text{tr} \mathbf{C}(i,j).
\end{eqnarray*}
The matrix $\mathbf{C}(i,j)$ is frequently used in {\bf{CCA}} towards random vectors $\mathbf{x}_i$ and $\mathbf{x}_j$, known as the canonical correlation matrix. A well known fact is that the eigenvalues of $\mathbf{C}(i,j)$ provide meaningful measures of the correlation between these two random vectors. Actually, its singular values (square root of eigenvalues) are the well-known canonical correlations between $\mathbf{x}_i$ and $\mathbf{x}_j$. Thus the summation of all eigenvalues, $\text{tr} \mathbf{C}(i,j)$, also known as Pillai's test statistic, can obviously measure the correlation between $\mathbf{x}_i$ and $\mathbf{x}_j$. Summing up these measurements over all $(i,j)$ pairs (subject to $j<i$) yields our Schott type statistic, which can capture the overall correlation among $k$ parts of $\mathbf{x}$. Thus from the perspective of {\bf{CCA}}, the Schott type statistic possesses a theoretical validity for the testing problem ({\bf{T1}}).
%

Our main result is the following CLT under $\mathbf{H}_0$.
\begin{thm}\label{th1}Assume that $p_i:=p_i(n)$ and $p_i/n\to y_i\in(0,1)$ for all $i\in\llbracket k\rrbracket$, we have
\begin{eqnarray*}
\frac{\boldsymbol{s}(\mathbf{B})-a_n}{\sqrt{b_n}}\Longrightarrow N(0,1), \quad \text{as}\quad n\to\infty,
\end{eqnarray*}
where
\begin{eqnarray*}
a_n=\frac{1}{2}\sum_{\substack{i,j,   i\neq j}} \frac{p_ip_j}{n-1},\qquad  b_n=\sum_{\substack{i,j,   i\neq j}} \frac{p_ip_j(n-1-p_i)(n-1-p_j)}{(n-1)^4}.
\end{eqnarray*}
\end{thm}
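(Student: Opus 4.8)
The plan is to reduce $\boldsymbol{s}(\mathbf{B})$ to a sum of traces of products of \emph{independent} random projections, and then to extract the CLT from the real second order freeness of Haar orthogonal matrices established in \cite{MP2013}. Concretely, starting from the identity $\boldsymbol{s}(\mathbf{B})=\sum_{1\le i<j\le k}\operatorname{tr}\mathbf{C}(i,j)$, I would pass to the matrices $\mathbf{Z}_i$ (whose entries are i.i.d.), set $N:=n-1$, and put $\mathbf{P}_i:=\mathbf{Z}_i'(\mathbf{Z}_i\mathbf{Z}_i')^{-1}\mathbf{Z}_i$, the orthogonal projection in $\mathbb{R}^{N}$ onto the row space of $\mathbf{Z}_i$. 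A cyclic rearrangement of $\mathbf{C}(i,j)$ gives $\operatorname{tr}\mathbf{C}(i,j)=\operatorname{tr}(\mathbf{P}_i\mathbf{P}_j)$, hence
\[
\boldsymbol{s}(\mathbf{B})=\sum_{1\le i<j\le k}\operatorname{tr}(\mathbf{P}_i\mathbf{P}_j).
\]
Since $\mathbf{P}_i$ depends on $\mathbf{Z}_i$ only through its row space, under $\mathbf{H}_0$ the projections $\mathbf{P}_1,\dots,\mathbf{P}_k$ are mutually independent, and each $\mathbf{P}_i$ is a Haar-distributed rank-$p_i$ projection; thus $(\mathbf{P}_1,\dots,\mathbf{P}_k)\stackrel{d}{=}(\mathbf{O}_1\mathbf{D}_1\mathbf{O}_1',\dots,\mathbf{O}_k\mathbf{D}_k\mathbf{O}_k')$ with $\mathbf{D}_i:=\operatorname{diag}(\mathbf{I}_{p_i},\mathbf{0})$ and $\mathbf{O}_1,\dots,\mathbf{O}_k$ independent Haar on $O(N)$. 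Note that this construction requires only $p_i<n$, which holds eventually under (\ref{3192}); no restriction on $p$ or on partial sums of the $p_i$ is needed.

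The second step is to compute the first two moments. Writing $\mathbf{P}_j=\mathbf{V}_j\mathbf{V}_j'$ with $\mathbf{V}_j$ an $N\times p_j$ Haar frame and using the low-order moments of $\mathbf{V}_j$ (equivalently $\mathbb{E}[\mathbf{P}_j]=\tfrac{p_j}{N}\mathbf{I}$ and the Weingarten formula for $\mathbb{E}[\mathbf{P}_j^{\otimes 2}]$), together with $\operatorname{tr}\mathbf{P}_i^2=\operatorname{tr}\mathbf{P}_i=p_i$, one finds
\[
\mathbb{E}[\operatorname{tr}(\mathbf{P}_i\mathbf{P}_j)]=\frac{p_ip_j}{N},\qquad \operatorname{Var}\big(\operatorname{tr}(\mathbf{P}_i\mathbf{P}_j)\big)=\frac{2\,p_ip_j(N-p_i)(N-p_j)}{N^{2}(N-1)(N+2)}.
\]
Summing the first identity over $i<j$ gives $\mathbb{E}[\boldsymbol{s}(\mathbf{B})]=a_n$ exactly. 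Put $Y_{ij}:=\operatorname{tr}(\mathbf{P}_i\mathbf{P}_j)-\tfrac{p_ip_j}{N}$; conditioning on the projection shared by two index pairs (and on nothing when the pairs are disjoint) and using $\mathbb{E}[\mathbf{P}_\ell-\tfrac{p_\ell}{N}\mathbf{I}]=\mathbf{0}$ shows that $\mathbb{E}[Y_{ij}Y_{i'j'}]=0$ whenever $\{i,j\}\ne\{i',j'\}$. Hence $\operatorname{Var}(\boldsymbol{s}(\mathbf{B}))=\sum_{i<j}\operatorname{Var}(\operatorname{tr}(\mathbf{P}_i\mathbf{P}_j))$, which equals $b_n$ up to a factor $N^{2}/\{(N-1)(N+2)\}=1+O(1/n)$; in particular $\operatorname{Var}(\boldsymbol{s}(\mathbf{B}))/b_n\to1$ and $b_n\to\sum_{i<j}2y_iy_j(1-y_i)(1-y_j)>0$.

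For the CLT it remains to show $\boldsymbol{s}(\mathbf{B})-a_n=\sum_{i<j}Y_{ij}$ is asymptotically Gaussian. Each $Y_{ij}$ is the centered trace of the length-$2$ alternating word $\mathbf{P}_i\mathbf{P}_j$ in the family $\{\mathbf{P}_1,\dots,\mathbf{P}_k\}=\{\mathbf{O}_i\mathbf{D}_i\mathbf{O}_i'\}$. By \cite{MP2013}, independent Haar-orthogonal conjugates of deterministic matrices are asymptotically \emph{real second order free}, and the joint fluctuations of traces of words in such a family are asymptotically Gaussian; here the $\mathbf{D}_i$ have the deterministic first-order limit $\mathrm{Bernoulli}(y_i)$ and vanishing second-order fluctuation, so the hypotheses apply. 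Consequently $(Y_{ij})_{1\le i<j\le k}$ converges jointly to a centered Gaussian vector $(G_{ij})$ whose covariance is prescribed by the real second-order freeness formula, and, combined with the moment computation above, this forces $\operatorname{Cov}(G_{ij},G_{i'j'})=\delta_{\{i,j\},\{i',j'\}}\,2y_iy_j(1-y_i)(1-y_j)$. Therefore $\boldsymbol{s}(\mathbf{B})-a_n\Rightarrow G:=\sum_{i<j}G_{ij}\sim N\big(0,\sum_{i<j}2y_iy_j(1-y_i)(1-y_j)\big)$, and since $b_n\to\operatorname{Var}(G)>0$, Slutsky's theorem yields $(\boldsymbol{s}(\mathbf{B})-a_n)/\sqrt{b_n}\Rightarrow N(0,1)$.

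The first two steps are elementary; the crux is the third. The main obstacle is to set up and correctly apply the real second order freeness of \cite{MP2013} — verifying that $\{\mathbf{O}_i\mathbf{D}_i\mathbf{O}_i'\}$ meets its hypotheses (which, since the $p_i$ grow with $n$, means running the second-order limit theory with $p_i/N\to y_i$ as the operative scaling, i.e. working with a sequence of models and checking uniformity) — and then unwinding the annular non-crossing combinatorics that evaluates the covariance of the length-$2$ mixed words. Crucially, one must retain the extra term that distinguishes the \emph{orthogonal} second-order case from the unitary one; it is precisely this term that produces the factor $2$ in $b_n$. As an alternative to invoking second order freeness wholesale, one could instead run a method-of-moments argument directly on $\sum_{i<j}Y_{ij}$ using the orthogonal Weingarten calculus, matching the moments of a Gaussian, at the cost of substantially heavier bookkeeping.
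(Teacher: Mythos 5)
Your proposal follows essentially the same route as the paper: reduce $\boldsymbol{s}(\mathbf{B})$ to traces of products of independent Haar-distributed random projections (the paper's $\mathbf{Q}=\sum_i\mathbf{O}_i'\mathbf{P}_i\mathbf{O}_i$, since $\operatorname{tr}\mathbf{Q}^2=p+2\sum_{i<j}\operatorname{tr}(\mathbf{P}_i\mathbf{P}_j)$), compute the mean and variance by orthogonal Weingarten calculus, and obtain asymptotic Gaussianity from the real second order freeness of Haar orthogonal matrices (Proposition 52 of \cite{MP2013}), which the paper applies recursively to show $\mathbf{Q}$ has a real second order limit distribution so that all cumulants of order at least three vanish. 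The only notable (and welcome) refinement is your conditioning argument showing the cross-covariances $\mathbb{E}[Y_{ij}Y_{i'j'}]$ vanish exactly for $\{i,j\}\neq\{i',j'\}$, where the paper settles for an $O(1/n)$ Weingarten estimate.
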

\begin{rmk} Note that by assumption, $a_n$ is of order $O(n)$ and $b_n$ is of order $O(1)$.
\end{rmk}

\section{Numerical studies}
In this section we compare the performance of the statistics proposed in \cite{JY2013}, \cite{JBZ2013} and our Schott type statistic, under various settings of sample size and dimensionality. For simplicity, we will focus on the case of $k=3$.  The Wilks' statistic in (\ref{3191}) without any renormalization has been shown to perform badly for ($\mathbf{T1}$)
in \cite{JY2013} and \cite{JBZ2013}, thus will not be considered in this section. Under the null hypothesis $\mathbf{H}_0$, the samples are drawn from the following two distributions:

\vspace{0.2cm}
(\uppercase\expandafter{\romannumeral1}) $\boldsymbol{\mu}_i=\mathbf{0}_{p_i\times1}$, $\Sigma_{ii}=\mathbf{I}_{p_i}$;

(\uppercase\expandafter{\romannumeral2}) $\boldsymbol{\mu}_i=(\mu_{i1},\dots,\mu_{ip_i})'$, $\Sigma_{ii}=\text{diag}(d_{i1},\dots,d_{ip_i})$, where $\mu_{ij}\thicksim  U(-1,1)$ and $d_{ij}\thicksim \chi^2_8$. \vspace{0.2cm}\newline
Here $U(-1,1)$ denotes the uniform distribution with the support $(-1,1)$ and $\chi^2_8$ denotes the chi-squared distribution  with eight degrees of freedom.
Under the alternative hypothesis $\mathbf{H}_1$, we adopt two distributions introduced in \cite{JY2013} and \cite{JBZ2013} respectively:

\vspace{0.2cm}
(\uppercase\expandafter{\romannumeral3}) $\boldsymbol{\mu}=\mathbf{0}_{p\times1}$, $\Sigma=0.15\mathbf{1}_{p\times p}+0.85\mathbf{I}_p$;

(\uppercase\expandafter{\romannumeral4}) $\boldsymbol{\mu}_i=\mathbf{0}_{p_i\times1}$, $\Sigma_{ii}=26/25\mathbf{I}_{p_i}$, $\Sigma_{12}=1/25\mathds{I}_{12}$, $\Sigma_{13}=6/25\mathds{I}_{13}$ and $\Sigma_{23}=6/25\mathds{I}_{23}$. \vspace{0.2cm}\newline
Here $\mathbf{1}_{p\times p}$ stands for the matrix whose entries are all equal to 1 and  $\mathds{I}_{ij}$ stands for the rectangular matrix whose main diagonal entries are  equal to 1 and the others are  equal to 0.
The  empirical sizes and  powers are obtained based on 100,000 replications.
In the tables,
$T_0$ denotes the proposed Schott type test, $T_1$ denotes the renormalized likelihood ratio test of \cite{JY2013} and $T_2$ denotes the trace criterion test of  \cite{JBZ2013}.

Table 1 reports the empirical sizes of three tests at the $5\%$ significance level for scenario \uppercase\expandafter{\romannumeral1} and  scenario  \uppercase\expandafter{\romannumeral2}. It shows that the proposed Schott type test $T_0$ performs quite robust with respect to these two distributions. Even when $p_1=2,p_2=2,p_3=3$ and $n=4$ the attained significance levels are also less than $8\%$. It is obvious that the empirical size of $T_0$ is good enough in the  inapplicable cases of $T_1$ and $T_2$. Furthermore,  in addition to these inapplicable cases,  the test $T_0$ also performs better than $T_1$ in terms  of size and has similar empirical sizes with $T_2$.

The empirical powers of three tests  for scenarios \uppercase\expandafter{\romannumeral3} and \uppercase\expandafter{\romannumeral4} are presented in Table 2. The most gratifying thing is that $T_0$ performs much better than $T_1$ and $T_2$ in both scenarios.  It is worthy to notice that when the sample size $n$ is large, the renormalized likelihood ratio test $T_1$  is also satisfactory. But the  performance of $T_2$ is not so good.

\begin{table}[h]
  \centering

 {\small
\begin{tabular}{ccccc|ccc}
    \hline
    \multirow{2}{*}{$(p_1,p_2,p_3)$} & \multirow{2}{*}{$n$}&\multicolumn{3}{c}{Scenario \uppercase\expandafter{\romannumeral1} }&\multicolumn{3}{c}{Scenario \uppercase\expandafter{\romannumeral2} }\\ \cline{3-8}
&&$T_{0}$   &$T_{1}$&$T_{2}$&$T_{0}$&$T_{1}$&$T_{2}$\\ \hline
$(2,2,3)$   &$ 4 $ & 0.07796 &   N.A.  &   N.A.  & 0.07747 &   N.A.  &   N.A.  \\
            &$ 6 $ & 0.04787 &   N.A.  & 0.04305 & 0.04963 &   N.A.  & 0.04439 \\
            &$10 $ & 0.05314 & 0.07448 & 0.04577 & 0.05296 & 0.07430 & 0.04696 \\
            &$16 $ & 0.05659 & 0.07222 & 0.05340 & 0.05685 & 0.06968 & 0.05301 \\
            &$30 $ & 0.06027 & 0.06815 & 0.05931 & 0.06118 & 0.06806 & 0.05943 \\
            &$50 $ & 0.06070 & 0.06496 & 0.06016 & 0.06309 & 0.06667 & 0.06187 \\
$(10,10,15)$&$20 $ & 0.04824 &   N.A.  &   N.A.  & 0.04900 &   N.A.  &   N.A.  \\
            &$30 $ & 0.04942 &   N.A.  & 0.04852 & 0.04874 &   N.A.  & 0.04767 \\
            &$40 $ & 0.04873 & 0.05998 & 0.04771 & 0.05030 & 0.06072 & 0.04881 \\
            &$50 $ & 0.05117 & 0.05781 & 0.05034 & 0.05019 & 0.05649 & 0.04857 \\
            &$100$ & 0.05257 & 0.05473 & 0.05113 & 0.05186 & 0.05556 & 0.05236 \\
            &$150$ & 0.05257 & 0.05445 & 0.05232 & 0.05240 & 0.05448 & 0.05201 \\
$(30,30,45)$&$60 $ & 0.04978 &   N.A.  &   N.A.  & 0.04856 &   N.A.  &   N.A.  \\
            &$90 $ & 0.04924 &   N.A.  & 0.04914 & 0.04975 &   N.A.  & 0.04949 \\
            &$110$ & 0.04913 & 0.05653 & 0.04868 & 0.05059 & 0.05668 & 0.04948 \\
            &$130$ & 0.04982 & 0.05293 & 0.04999 & 0.05067 & 0.05396 & 0.05057 \\
            &$150$ & 0.05034 & 0.05210 & 0.04963 & 0.05244 & 0.05323 & 0.05152 \\
            &$180$ & 0.04993 & 0.05196 & 0.04924 & 0.05084 & 0.05215 & 0.04936 \\
$(50,50,75)$&$100$ & 0.04980 &   N.A.  &   N.A.  & 0.04847 &   N.A.  &   N.A.  \\
            &$150$ & 0.04963 &   N.A.  & 0.05001 & 0.04946 &   N.A.  & 0.04926 \\
            &$180$ & 0.05182 & 0.05554 & 0.05126 & 0.04970 & 0.05383 & 0.04971 \\
            &$210$ & 0.04897 & 0.05229 & 0.04966 & 0.05041 & 0.05386 & 0.05021 \\
            &$250$ & 0.04838 & 0.05059 & 0.04913 & 0.04970 & 0.05126 & 0.04958 \\
            &$300$ & 0.05033 & 0.05133 & 0.04991 & 0.05002 & 0.05103 & 0.05037 \\
\hline

\end{tabular}}
 \caption{Empirical sizes of  tests $T_0$, $T_1$ and $T_2$ at  the $5\%$ significance level for scenario \uppercase\expandafter{\romannumeral1} and  scenario  \uppercase\expandafter{\romannumeral2} }\label{tb1}
\end{table}

\begin{table}[h]
  \centering
 {
\begin{tabular}{ccccc|ccc}
    \hline
    \multirow{2}{*}{$(p_1,p_2,p_3)$} & \multirow{2}{*}{$n$}&\multicolumn{3}{c}{Scenario \uppercase\expandafter{\romannumeral3} }&\multicolumn{3}{c}{Scenario \uppercase\expandafter{\romannumeral4} }\\ \cline{3-8}
&&$T_{0}$   &$T_{1}$&$T_{2}$&$T_{0}$&$T_{1}$&$T_{2}$\\ \hline
$(2,2,3)$   &$ 4 $ & 0.08219 &   N.A.  &   N.A.  & 0.07720 &   N.A.  &   N.A.  \\
            &$ 6 $ & 0.06070 &   N.A.  & 0.05122 & 0.05868 &   N.A.  & 0.04795 \\
            &$10 $ & 0.09727 & 0.10294 & 0.07617 & 0.08600 & 0.10186 & 0.06695 \\
            &$16 $ & 0.15997 & 0.15084 & 0.12610 & 0.13575 & 0.14768 & 0.11424 \\
            &$30 $ & 0.32599 & 0.28965 & 0.26751 & 0.27091 & 0.27946 & 0.24789 \\
            &$50 $ & 0.55628 & 0.50614 & 0.48825 & 0.49040 & 0.49669 & 0.47230 \\
$(10,10,15)$&$20 $ & 0.09288 &   N.A.  &   N.A.  & 0.07375 &   N.A.  &   N.A.  \\
            &$30 $ & 0.19754 &   N.A.  & 0.13685 & 0.14449 &   N.A.  & 0.09356 \\
            &$40 $ & 0.35574 & 0.22503 & 0.22320 & 0.25350 & 0.18131 & 0.17422 \\
            &$50 $ & 0.54114 & 0.42118 & 0.34155 & 0.38352 & 0.31902 & 0.28675 \\
            &$100$ & 0.98802 & 0.97426 & 0.90124 & 0.92197 & 0.90747 & 0.88459 \\
            &$150$ & 0.99997 & 0.99992 & 0.99828 & 0.99813 & 0.99792 & 0.99713 \\
$(30,30,45)$&$60 $ & 0.15294 &   N.A.  &   N.A.  & 0.17557 &   N.A.  &   N.A.  \\
            &$90 $ & 0.40723 &   N.A.  & 0.24709 & 0.59527 &   N.A.  & 0.30862 \\
            &$110$ & 0.61546 & 0.39853 & 0.37262 & 0.84084 & 0.46911 & 0.60497 \\
            &$130$ & 0.79664 & 0.76933 & 0.51235 & 0.95858 & 0.85025 & 0.84310 \\
            &$150$ & 0.91131 & 0.93370 & 0.65023 & 0.99305 & 0.97156 & 0.96005 \\
            &$180$ & 0.98457 & 0.99439 & 0.82161 & 0.99974 & 0.99890 & 0.99765 \\
$(50,50,75)$&$100$ & 0.17700 &   N.A.  &   N.A.  & 0.32881 &   N.A.  &   N.A.  \\
            &$150$ & 0.48045 &   N.A.  & 0.28762 & 0.93635 &   N.A.  & 0.62012 \\
            &$180$ & 0.68780 & 0.49147 & 0.42095 & 0.99589 & 0.76337 & 0.92511 \\
            &$210$ & 0.85001 & 0.89883 & 0.56308 & 0.99991 & 0.99495 & 0.99500 \\
            &$250$ & 0.95905 & 0.99340 & 0.73447 & 1.00000 & 0.99999 & 0.99997 \\
            &$300$ & 0.99567 & 0.99993 & 0.88908 & 1.00000 & 1.00000 & 1.00000 \\
\hline

\end{tabular}}
  \caption{Empirical powers of  tests $T_0$, $T_1$ and $T_2$ at the   $5\%$ significance level for scenario \uppercase\expandafter{\romannumeral3} and  scenario  \uppercase\expandafter{\romannumeral4} }
\end{table}

\section{An example}
For illustration, we apply the proposed test statistic to the daily returns of  258 stocks issued by the  companies  from S\&P 500. The original data are the  closing prices or the  bid/ask average of these stocks for the trading days of the last quarter in 2013, i.e., from 1 October 2013 to 31 December 2013, with total 64 days. This  dataset is derived from  the Center for Research in Security Prices  Daily Stock in Wharton Research Data Services. According to The North American Industry Classification System (NAICS),  which  is used by business and government to classify business establishments,  the 258 stocks are separated into 11  sectors. Numbers of stocks  in each sector are shown in Table \ref{tb3}.  A common  interest here is to test whether the daily returns for the investigated 11 sectors are independent.
\begin{table}[h]
  \centering
\begin{tabular}{c|c|c|c|c|c|c|c|c|c|c|c}
    \hline
Sector&\ \ 1\ \ &\ \ 2\ \ &\ \ 3\ \  &\ \ 4\ \ &\ \ 5\ \ &\ \ 6\ \ &\ \ 7\ \ &\ \ 8\ \ &\ \ 9\ \ &\ \ 10\ \ &\ \ 11\ \ \\\hline
Number of stocks&30&32&14&32&12&33&55&14&16&10&10\\\hline
\end{tabular}
\caption{Number of stocks in each NAICS Sectors. Sector 1 describes mining, quarrying, and oil and gas extraction; Sector 2 describes utilities;  Sector 3 describes wholesale trade; Sector 4 describes retail trade; Sector 5 describes transportation and warehousing; Sector 6 describes information; Sector 7 describes finance and insurance; Sector 8 describes real estate and rental and leasing; Sector 9 describes professional, scientific, and technical services; Sector 10 describes administrative and support and waste management and remediation services; Sector 11 describes arts, entertainment, and recreation.  }\label{tb3}
\end{table}

The testing model is  established  as  follows:  Denote $p_i$ as the number of stocks  in the
$i$th sector, $u_{il}(j)$ as the price for the $l$th stock  in the
$i$th sector at day $j$. Here $j\in\llbracket 64\rrbracket$. Correspondingly we have $p=\sum_{i=1}^{11}p_i=258$.  In order to   satisfy the condition of  the proposed test statistic, the original data $u_{il}(j)$ need to be  transformed  as follows:  (\romannumeral1) {\emph{{logarithmic difference}}:  Let  $x_{il}(j)=\ln(u_{il}(j+1)/u_{il}(j))$.
Notice that  $j\in\llbracket 63\rrbracket$. So we denote $n=63$.
Logarithmic difference  is a very commonly used procedure in finance. There are a number of theoretical and practical advantages of using logarithmic returns, especially we shall assume that the sequence of logarithmic returns are independent of each other for  big  time scales (e.g.  $\geq $ 1 day, see \cite{Rama(01)}). (\romannumeral2) {\emph{{power transform}}: It is  well known that if a security price follows geometric Brownian motion,  then the logarithmic returns of the security are normally distributed. However in	 most	 cases, the normalized logarithmic returns $x_{il}(j)$ are  considered to have sharper peaks and  heavier tails than the standard normal distribution. Thus we first  transform $x_{il}(j)$ to $\hat x_{il}(j)$ by Box-Cox transformation, and then suppose the  transformed
 data follows a standard normal distribution, that  is
\begin{align*}
\tilde x_{il}(j)=\left(\frac{\hat x_{il}(j)-\bar x_{il}}{\hat\sigma_{il}}\right)^{\beta_{il}}\sim N(0,1).
\end{align*}
 Here  $\beta_{il}$ is an unknown parameter, $\bar x_{il}$ and $\hat\sigma_{il} $ are the sample mean and sample standard deviation of  $\hat x_{il}(j), j\in\llbracket n\rrbracket$. $\beta_{il}$ can be estimated by
 \begin{align*}
\frac{1}{n}\sum_{j=1}^n\tilde x_{il}^4(j)\approx \int_{a_{il}}^{b_{il}}t^4d\Phi(t),
\end{align*}
where  ${a_{il}}=\min_{j\in\llbracket n\rrbracket}\tilde x_{il}(j)$, ${b_{il}}=\max_{j\in\llbracket n\rrbracket}\tilde x_{il}(j)$ and $\Phi(t)$ is the standard normal distribution function.
(\romannumeral3) {\emph{{normality test }}:  we use the Kolmogorov-Smirnov test to test whether  the  transformed prices of each stock are drawn from the  standard normal distribution. By calculation, we get 258 $p$-values.  Among them the minimum is 0.1473. And 91.86\% of $p$-values are bigger than 0.5. For illustration, we present  the smoothed empirical densities of the transformed data  $\tilde x_{il}(j)$ for the first four stocks of each sector in Figure \ref{fig3}.
\begin{figure}[h]
\centering
\includegraphics[scale=0.85]{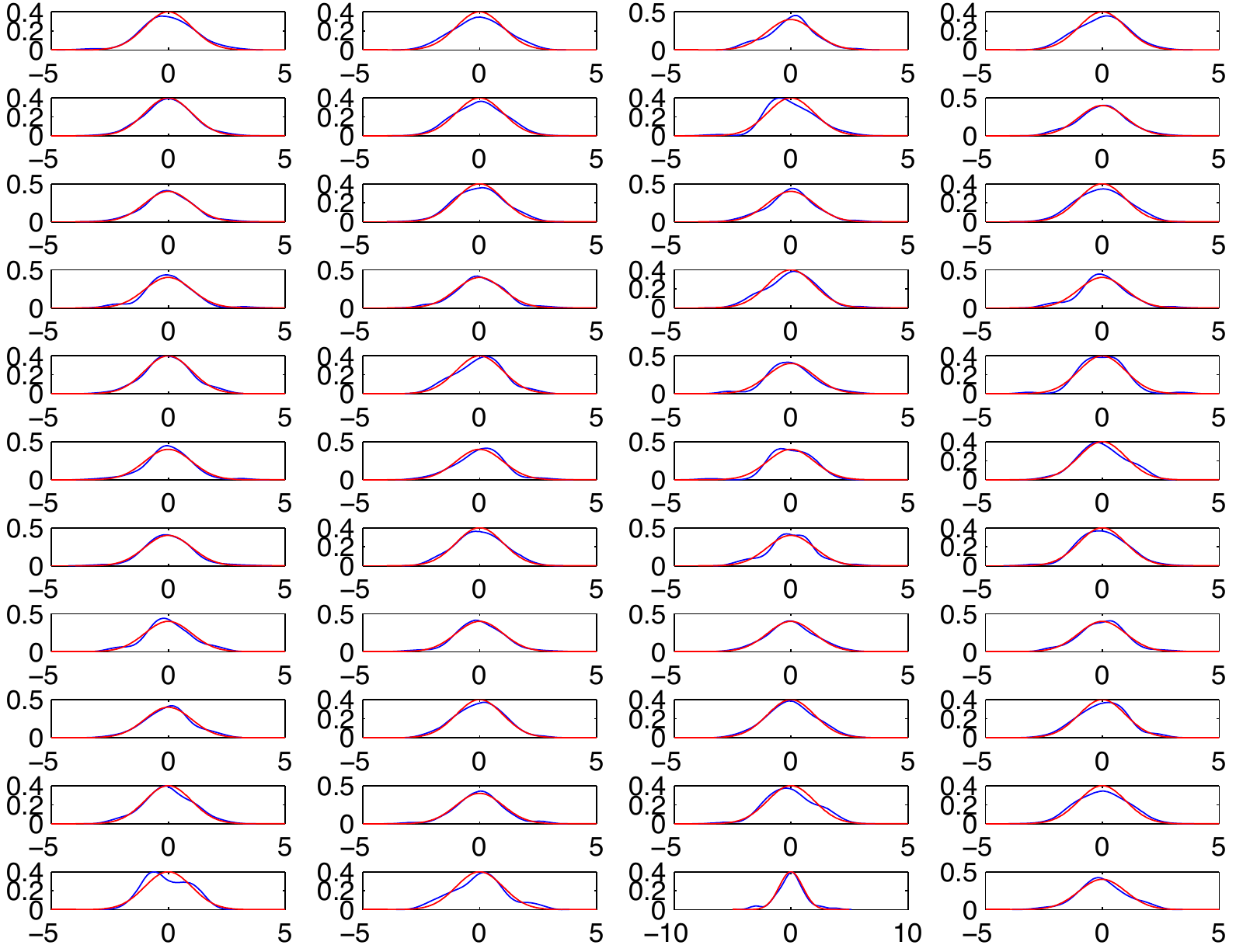}
\caption{Graphs of the empirical density function of the transformed data versus the standard normal distribution.
These graphs contain the empirical density functions of the transformed data for  the first four stocks of each sector used in our study. The blue curve is the smoothed density function of the transformed data for one stock and the red curve is standard normal density function.}\label{fig3}
\end{figure}
From these graphs, we can also see that the transformed data fit the normal density curve well.
Thus from the above arguments,  we  can assume that the transformed data $\tilde x_{il}(j)$ satisfy the conditions in Theorem \ref{th1}.

Now we apply $\boldsymbol{s}(\mathbf{B})$ to test the independence of every two sectors.
The $p$-values are shown in Table \ref{tb4}.  We find in the total 55 pairs of sectors,  there are 23 pairs with $p$-values bigger than 0.05  and 18 pairs with $p$-values bigger than 0.1. Interestingly,  according to these results, if we set the significance level as 5\% we find that there are  seven sectors which are independent of
Sector 7, the finance and insurance sector, which seems to be most independent of other sectors. On the other hand,  Sector 11 (the arts, entertainment, and recreation sector)  is  dependent on all other sectors except Sector 1 (the mining, quarrying, and oil and gas extraction sector). We also investigate the mutual independence of  every three sectors.
Applying Theorem \ref{th1},  we find that there are 11 groups with $p$-values bigger than 0.05. In addtion, we find that there is only one group containing four sectors which are mutually independent.  The    results   are shown in Table \ref{tb5}. Thus we have strong evidence to believe that every five sectors are dependent.
\begin{table}[h]
  \centering{
\begin{tabular}{c|cccccccccc}

Sector&1 & 2 &3 &4&5& 6&7 &8 &9&10 \\\hline
2&0.0405&&&&&&&&&\\
3&0.0487&0.2735&&&&&&&&\\
4&0.0604&0         &0.0002&&&&&&&\\
5&0.0012&0.1041&0.0027&0.1639&&&&&&\\
6&0.0073&0.0048&0.0008&0.4285&0.0444&&&&&\\
7&0.2299&0.4830&0.0451&0.1053&0.7080&0.6843&&&&\\
8&0.3558&0.0208&0.2458&0.3547&0.0013&0.0645&0.1127&&&\\
9&0.1411&0.0833&0         &0.0005&0.2403&0.0124&0.4521&0.0048&&\\
10&0.0418&0.3075&0.0004&0.0847&0.0746&0.0026&0.0036&0.0109&0&\\
11&0.8689&0.0048&0         &0.0470&0.0167&0.0004&0.0490&0         &0.0003&0.0003\\
\end{tabular}
\caption{The $p$-values obtained by the proposed test  under  $\mathbf{H}_0$ with $k=2$. Notice that the results are  rounded up to the fourth decimal point. }\label{tb4}
}
\end{table}

\begin{table}[h]
  \centering{

\begin{tabular}{c|ccccccccccc}
\hline
Sectors&(1,4,8)&(1,7,8)&(1,7,9)&(2,5,7) &(2,7,9)&(4,5,7)\\\hline
$p$-value&0.0804&0.1135&0.1120&0.2541&0.1413&0.1263\\\hline\hline
Sectors&(4,6,7)&(4,6,8)&(4,7,8)&(5,7,9)&(6,7,8)&(4,6,7,8)\\\hline
$p$-value&0.3088&0.1277&0.0686&0.3913&0.0855&0.0650\\\hline
\end{tabular}
\caption{The $p$-values obtained by the proposed test  under  $\mathbf{H}_0$ with $k=3$ and $k=4$. Notice that the results are  rounded up to the fourth decimal point. }\label{tb5}
}
\end{table}

\section{Linear spectral statistics and second order freeness}
In this section, we will introduce some {\bf{RMT}} and {\bf{FPT}} apparatus with which Theorem \ref{th1} can be proved.  We will just summarize the main ideas on how these tools fit into the framework of the limiting behavior of our proposed statistic, but leave the details of the reasoning and calculation to the Appendix. We start from the following elementary fact. To wit,
for two matrices $\mathbf{S}$ and $\mathbf{T}$, we know that $\mathbf{S}\mathbf{T}$ and $\mathbf{T}\mathbf{S}$ share the same non-zero eigenvalues, as long as both $\mathbf{S}\mathbf{T}$ and $\mathbf{T}\mathbf{S}$ are square. Therefore, to study the eigenvalues of $\mathbf{B}$, it is equivalent to study the eigenvalues of the $(n-1)\times (n-1)$ matrix
\begin{eqnarray*}
\mathbf{\underline{B}}:=\mathbf{Z}'\text{diag}[(\mathbf{Z}_i\mathbf{Z}'_i)^{-1}]_{i=1}^k\mathbf{Z}=\sum_{i=1}^k\mathbf{Z}'_i(\mathbf{Z}_i\mathbf{Z}'_i)^{-1}\mathbf{Z}_i.
\end{eqnarray*}
Setting
\begin{eqnarray*}
\boldsymbol{s}(\mathbf{\underline{B}})=\frac{1}{2}\sum_{\ell=1}^{n-1} \lambda_\ell^2(\mathbf{\underline{B}})-\frac{p}{2},
\end{eqnarray*}
by the above discussion we can assert
\begin{eqnarray*}
\boldsymbol{s}(\mathbf{\underline{B}})=\boldsymbol{s}(\mathbf{B}).
\end{eqnarray*}
A main advantage of $\mathbf{\underline{B}}$ is embodied in the following proposition, which will be a starting point of the proof of Theorem \ref{th1}.
\begin{prop} \label{prop.1}Assume that $\mathbf{O}_1,\ldots, \mathbf{O}_k$ are i.i.d. $(n-1)$-dimensional random orthogonal matrices possessing Haar distribution on the orthogonal group $O(n-1)$. Let $\mathbf{P}_i=\mathbf{I}_{p_i} \oplus \mathbf{0}_{n-1-p_i}$ be a diagonal projection matrix with rank $p_i$, for each $i\in\llbracket k\rrbracket$. Here we denote by $\mathbf{0}_m$ the $m\times m$ null matrix. Then under $\mathbf{H}_0$, we have
\begin{eqnarray*}
\underline{\mathbf{B}}\stackrel{d}=\sum_{i=1}^k \mathbf{O}_i'\mathbf{P}_i\mathbf{O}_i.
\end{eqnarray*}
\end{prop}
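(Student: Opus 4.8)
The plan is to exploit the rotational invariance of the i.i.d.\ Gaussian matrix $\mathbf{Z}$ under the null hypothesis. Under $\mathbf{H}_0$ we have $\Sigma_{ij}=\mathbf{0}$ for $i\neq j$, so $\Sigma=\text{diag}(\Sigma_{ii})_{i=1}^k$ and the sub-matrices $\mathbf{Z}_1,\ldots,\mathbf{Z}_k$ are mutually independent; moreover each $\mathbf{Z}_i$ is (up to the left multiplication by $\Sigma_{ii}^{1/2}$, which cancels in $\mathbf{Z}_i'(\mathbf{Z}_i\mathbf{Z}_i')^{-1}\mathbf{Z}_i$) a $p_i\times(n-1)$ matrix with i.i.d.\ $N(0,1)$ entries. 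Hence $\underline{\mathbf{B}}=\sum_{i=1}^k \mathbf{Z}_i'(\mathbf{Z}_i\mathbf{Z}_i')^{-1}\mathbf{Z}_i$ is a sum of $k$ independent random matrices, and the $i$th summand $\mathbf{H}_i:=\mathbf{Z}_i'(\mathbf{Z}_i\mathbf{Z}_i')^{-1}\mathbf{Z}_i$ depends only on $\mathbf{Z}_i$ with standard Gaussian entries.

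The key observation is that $\mathbf{H}_i$ is the orthogonal projection matrix onto the row space of $\mathbf{Z}_i$ inside $\mathbb{R}^{n-1}$: indeed $\mathbf{H}_i^2=\mathbf{H}_i$, $\mathbf{H}_i'=\mathbf{H}_i$, and $\text{rank}(\mathbf{H}_i)=p_i$ almost surely (since $\mathbf{Z}_i\mathbf{Z}_i'$ is invertible a.s.\ when $p_i<n-1$, which holds under \eqref{3192} for $n$ large). First I would record that any $(n-1)$-dimensional random orthogonal projection of fixed rank $p_i$ whose distribution is invariant under conjugation by $O(n-1)$ must equal $\mathbf{O}_i'\mathbf{P}_i\mathbf{O}_i$ in distribution, where $\mathbf{O}_i$ is Haar on $O(n-1)$ and $\mathbf{P}_i=\mathbf{I}_{p_i}\oplus\mathbf{0}_{n-1-p_i}$: this is because the set of rank-$p_i$ orthogonal projections is the Grassmannian $Gr(p_i,n-1)$, on which $O(n-1)$ acts transitively, so there is a unique $O(n-1)$-invariant probability measure, namely the one induced by $\mathbf{O}\mapsto\mathbf{O}'\mathbf{P}_i\mathbf{O}$. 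Then I would verify the invariance: for any fixed $\mathbf{Q}\in O(n-1)$, the matrix $\mathbf{Z}_i\mathbf{Q}'$ has the same distribution as $\mathbf{Z}_i$ (right-invariance of the i.i.d.\ Gaussian law under orthogonal transformations of the columns), and $\mathbf{Q}\mathbf{H}_i\mathbf{Q}' = (\mathbf{Z}_i\mathbf{Q}')'((\mathbf{Z}_i\mathbf{Q}')(\mathbf{Z}_i\mathbf{Q}')')^{-1}(\mathbf{Z}_i\mathbf{Q}')$, so $\mathbf{Q}\mathbf{H}_i\mathbf{Q}'\stackrel{d}=\mathbf{H}_i$. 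Therefore $\mathbf{H}_i\stackrel{d}=\mathbf{O}_i'\mathbf{P}_i\mathbf{O}_i$.

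Finally I would assemble the pieces: the $\mathbf{H}_i$ are independent across $i$, the $\mathbf{O}_i$ can be taken independent, and the distributional identity holds summand by summand, so by independence $\underline{\mathbf{B}}=\sum_{i=1}^k\mathbf{H}_i\stackrel{d}=\sum_{i=1}^k\mathbf{O}_i'\mathbf{P}_i\mathbf{O}_i$. I would also note that the presence of a zero column in $\mathbf{Z}A'$ (the first column, coming from the sample-mean subtraction) is harmless: it is already accounted for because we work with $\mathbf{Z}$ of size $(n-1)$ and $\underline{\mathbf{B}}$ is $(n-1)\times(n-1)$.

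The main obstacle is not any single hard computation but rather getting the two technical points exactly right: (i) confirming that $\mathbf{Z}_i\mathbf{Z}_i'$ is invertible almost surely so that $\mathbf{H}_i$ is genuinely a rank-$p_i$ projection (this uses $p_i<n-1$ eventually, guaranteed by \eqref{3192}), and (ii) justifying that $\Sigma_{ii}$ drops out, i.e.\ that replacing $\mathbf{Z}_i$ by $\Sigma_{ii}^{-1/2}\mathbf{Z}_i$ leaves $\mathbf{H}_i$ unchanged because $(\Sigma_{ii}^{-1/2}\mathbf{Z}_i)'((\Sigma_{ii}^{-1/2}\mathbf{Z}_i)(\Sigma_{ii}^{-1/2}\mathbf{Z}_i)')^{-1}(\Sigma_{ii}^{-1/2}\mathbf{Z}_i)=\mathbf{Z}_i'(\mathbf{Z}_i\mathbf{Z}_i')^{-1}\mathbf{Z}_i$, so that only the standardized Gaussian structure matters. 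Once these are in place the uniqueness-of-invariant-measure argument closes the proof immediately.
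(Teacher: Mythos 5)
Your proof is correct, but it closes the argument by a different mechanism than the paper. The paper performs a singular value decomposition $\mathbf{Z}_i=\mathbf{U}_i'\mathbf{D}_i\mathbf{O}_i$, invokes the fact that the right singular vector matrix $\mathbf{O}_i$ of a Gaussian matrix is Haar distributed, and then computes directly that $\mathbf{Z}_i'(\mathbf{Z}_i\mathbf{Z}_i')^{-1}\mathbf{Z}_i=\mathbf{O}_i'\mathbf{D}_i'(\mathbf{D}_i\mathbf{D}_i')^{-1}\mathbf{D}_i\mathbf{O}_i=\mathbf{O}_i'\mathbf{P}_i\mathbf{O}_i$. You instead identify each summand $\mathbf{H}_i$ as a rank-$p_i$ orthogonal projection whose law is invariant under conjugation by $O(n-1)$, and appeal to the uniqueness of the invariant probability measure on the Grassmannian to conclude $\mathbf{H}_i\stackrel{d}=\mathbf{O}_i'\mathbf{P}_i\mathbf{O}_i$; independence across $i$ then finishes the proof exactly as in the paper. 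Both arguments ultimately rest on the right rotational invariance $\mathbf{Z}_i\mathbf{Q}'\stackrel{d}=\mathbf{Z}_i$. The paper's route is a one-line algebraic identity once the Haar property of $\mathbf{O}_i$ is granted, but that property requires some care about the (non-)uniqueness of the SVD; your route avoids SVD entirely and is self-contained, and it has the added merit of making explicit two points the paper leaves implicit, namely that $\Sigma_{ii}$ cancels out of $\mathbf{H}_i$ (the paper's remark that the entries of $\mathbf{Z}_i$ are i.i.d.\ is only literally true after this standardization) and that $\mathbf{Z}_i\mathbf{Z}_i'$ is a.s.\ invertible so that $\mathbf{H}_i$ is genuinely a rank-$p_i$ projection.
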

\begin{proof} We perform the singular value decomposition as $\mathbf{Z}_i:=\mathbf{U}_i'\mathbf{D}_i \mathbf{O}_i$, where $\mathbf{U}_i$ and $\mathbf{O}_i$ are $p_i$-dimensional and $(n-1)$-dimensional orthogonal matrices respectively, and $\mathbf{D}_i$ is a $p_i\times (n-1)$ rectangular matrix whose main diagonal entries are nonzero while the others are all zero. It is well known that when $\mathbf{Z}_i$ has i.i.d. normal entries, both $\mathbf{U}_i$ and $\mathbf{O}_i$ are Haar distributed. Then an elementary calculation leads to Proposition 1.
\end{proof}
Note that Proposition \ref{prop.1} allows us to study the eigenvalues of a summation of $k$ independent random projections instead. Moreover, it is obvious that under $\mathbf{H}_0$, this summation of random matrices does not depend on the unknown population mean vectors and  covariance matrices of $\mathbf{x}_i$, $i\in\llbracket k\rrbracket$.

To compress notation, we set
\begin{eqnarray}
\mathbf{Q}:=\sum_{i=1}^k \mathbf{O}_i'\mathbf{P}_i\mathbf{O}_i,\quad \mathbf{Q}\stackrel{d}=\underline{\mathbf{B}}. \label{3251}
\end{eqnarray}
According to the discussion in the last section, we know that our Schott type statistic can be expressed (in distribution) in terms of the eigenvalues of $\mathbf{Q}$, since
\begin{eqnarray*}
\text{tr}\mathbf{B}^2\stackrel{d}=\text{tr} \mathbf{Q}^2=\sum_{\ell=1}^{n-1}\lambda_{\ell}^2(\mathbf{Q}).
\end{eqnarray*}
In {\bf{RMT}}, given an $N\times N$ random matrix $\mathbf{A}$ and some test function $f: \mathbb{C}\to \mathbb{C}$, one usually calls the quantity
\begin{eqnarray*}
\mathcal{L}_N[f]:=\sum_{\ell=1}^Nf(\lambda_\ell(\mathbf{A}))
\end{eqnarray*}
a {\emph{linear spectral statistic}} of $\mathbf{A}$ with test function $f$. For some classical random matrix models such as Wigner matrices and sample covariance matrices, linear spectral statistics have been widely studied. Not trying to be comprehensive, one can refer to \cite{BS2004}, \cite{Johansson1998}, \cite{LP2009}, \cite{SS1998} and \cite{Shcherbina2011} for instance. A notable feature in this type of CLTs is that usually the variance of the linear spectral statistic is of order $O(1)$ when the test function $f$ is smooth enough, mainly due to the strong correlations among eigenvalues, thus is significantly different from the case of i.i.d variables. Now, in a similar vein, with the random matrix $\mathbf{Q}$ at hand, we want to study the fluctuation of its linear spectral statistics,  focusing on the test function $f(x)=x^2$.

In the past few decades,  the main stream of {\bf{RMT}} has focused on the spectral behavior of single random matrix models such as Wigner matrix, sample covariance matrix and non-Hermitian matrix with i.i.d. variables. However, with the rapid development in {\bf{RMT}} and its related fields, the study of general polynomials with classical single random matrices as its variables is in increasing demand.
A favorable idea is to derive the spectral properties of the matrix polynomial from the information of the spectrums of its variables (single matrices).
Specifically, the question can be described as \\

{\emph{Given the eigenvalues of $\mathbf{A}$ and $\mathbf{B}$, what can one say about the eigenvalues of $h(\mathbf{A},\mathbf{B})$?}}\\\\
Here $h(\cdot,\cdot)$ is a bivariate polynomial. Usually, for deterministic matrices $\mathbf{A}$ and $\mathbf{B}$, only with their eigenvalues given, it is impossible to write down the eigenvalues of $h(\mathbf{A},\mathbf{B})$. However, for some  independent high-dimensional random matrices $\mathbf{A}$ and $\mathbf{B}$, deriving the limiting spectral properties of $h(\mathbf{A},\mathbf{B})$ via those
of $\mathbf{A}$ and $\mathbf{B}$ is possible.  To answer this kind of question, the right machinery to employ is {\bf{FTP}}. In the breakthrough work by \cite{V1991}, the author proved that if $(\mathbf{A}_n)_{n\in\mathbb{N}}$ and $(\mathbf{B}_n)_{n\in\mathbb{N}}$ are two independent sequences of  random Hermitian matrices  and at least one of them is orthogonally invariant (in distribution), then they satisfy the property of {\emph {asymptotic freeness}}, which allows one to derive the limit of $\frac{1}{n}\mathbb{E}\text{tr} h(\mathbf{A}_n,\mathbf{B}_n)$ from the limits of $(\frac{1}{n}\mathbb{E}\text{tr}\mathbf{A}_n^m)_{m\in\mathbb{N}}$ and $(\frac{1}{n}\mathbb{E}\text{tr}\mathbf{B}_n^m)_{m\in\mathbb{N}}$ directly. Sometimes we also call the asymptotic freeness of two random matrix sequences as {\emph{first order freeness}}.

Our aim in this paper, however, is not to derive the limit of the normalized trace of some polynomial in random matrices,
but to take a step further to study the fluctuation of the trace. To this end, we need to adopt the concept of {\emph{second order freeness}}, which was recently raised and developed in the series of work:  \cite{MS2006, MSS2007, CMSS2007, MP2013}, also see \cite{Redelmeier2013}. In contrast, the second order freeness aims at answering how to derive the fluctuation property of $\text{tr} h(\mathbf{A}_n, \mathbf{B}_n)$ from the limiting spectral properties of $\mathbf{A}_n$ and $\mathbf{B}_n$.

Especially, in \cite{MP2013}, the authors established the so-called {\emph{real second order freeness}} for orthogonal matrices, which is specialized in solving the fluctuation of the linear spectral statistics of polynomials in Haar distributed orthogonal matrices and deterministic matrices. For our purpose, we need to employ Proposition 52 in \cite{MP2013}, an ad hoc and simplified version of which can be heuristically sketched as follows. Assume that  $\{\mathbf{A}_n\}_{n\in\mathbb{N}}$ and $\{\mathbf{B}_n\}_{n\in\mathbb{N}}$ are two independent sequences of random matrices (may be deterministic), where $\mathbf{A}_n$ and $\mathbf{B}_n$ are $n$ by $n$, and the limits of $n^{-1}\mathbb{E}\text{tr} h_1(\mathbf{A}_n)$ and $n^{-1}\mathbb{E}\text{tr} h_2(\mathbf{B}_n)$ exist for any given polynomials $h_1$ and $h_2$, as $n\to\infty$. Moreover, $\text{tr} h_1(\mathbf{A}_n)$ and $\text{tr} h_2(\mathbf{B}_n)$ possess Gaussian fluctuations (may be degenerate) asymptotically for any given polynomials $h_1$ and $h_2$. Then $\text{tr} q (\mathbf{O}'\mathbf{A}_n\mathbf{O}, \mathbf{B}_n)$ also possesses Gaussian fluctuation asymptotically for any given bivariate polynomial $q(\cdot,\cdot)$, where $\mathbf{O}$ is supposed to be an $n\times n$ Haar orthogonal matrix independent of $\mathbf{A}_n$ and $\mathbf{B}_n$.

Now as for $\mathbf{Q}$, we can start from the case of $k=2$, which fits the above framework quite well. To wit, we can regard $\mathbf{O}_1'\mathbf{P}_1\mathbf{O}_1$ as $\mathbf{B}_{n-1}$ and $\mathbf{P}_2$ as $\mathbf{A}_{n-1}$, using Proposition 52 of \cite{MP2013} leads to the fact that $\text{tr} h(\mathbf{O}_1'\mathbf{P}_1\mathbf{O}_1+\mathbf{O}_2'\mathbf{P}_2\mathbf{O}_2)$ is asymptotically Gaussian after  an appropriate normalization, for any given polynomial $h$. Then we take $\mathbf{O}_1'\mathbf{P}_1\mathbf{O}_1+\mathbf{O}_2'\mathbf{P}_2\mathbf{O}_2$ as $\mathbf{B}_{n-1}$ and regard $\mathbf{P}_3$ as $\mathbf{A}_{n-1}$ and repeat the above discussion. By using Proposition 52 in \cite{MP2013} recursively, we can get our CLT for $\mathbf{Q}$ finally. A formal result which can be derived from Proposition 52 in \cite{MP2013} is as follows, whose proof will be presented in the Appendix.
\begin{thm} \label{th2}  For our matrix $\mathbf{Q}$ defined in (\ref{3251}), and any deterministic polynomial sequence $h_1,h_2,$ $h_3,\cdots$, we have
\begin{eqnarray*}
\text{Cov}(\text{tr } h_1(\mathbf{Q}), \text{tr } h_2(\mathbf{Q}))=O(1)
\end{eqnarray*}
and
\begin{eqnarray*}
\lim_{n\to\infty} \kappa_r(\text{tr } h_1(\mathbf{Q}),\ldots, \text{tr } h_r(\mathbf{Q}))=0,\quad \text{if } r\geq 3
\end{eqnarray*}
where $\kappa_r(\xi_1,\ldots,\xi_r)$ represents the joint cumulant of the random variables $\xi_1,\ldots,\xi_r$.
\end{thm}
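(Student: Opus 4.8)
The plan is to prove the statement by induction on $k$, applying Proposition 52 of \cite{MP2013} at each step, exactly as sketched in the text preceding the theorem. For $k=1$, $\mathbf{Q}=\mathbf{O}_1'\mathbf{P}_1\mathbf{O}_1$ is orthogonally conjugate to the deterministic projection $\mathbf{P}_1$, so $\operatorname{tr} h_1(\mathbf{Q})=\operatorname{tr} h_1(\mathbf{P}_1)$ is deterministic: its covariance is $0=O(1)$ and all higher cumulants vanish identically, so the base case is trivial. For the inductive step, write $\mathbf{Q}_k=\sum_{i=1}^k\mathbf{O}_i'\mathbf{P}_i\mathbf{O}_i=\mathbf{Q}_{k-1}+\mathbf{O}_k'\mathbf{P}_k\mathbf{O}_k$, where $\mathbf{O}_k$ is Haar on $O(n-1)$ and independent of $\mathbf{Q}_{k-1}$. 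By the induction hypothesis, for every deterministic polynomial $g$ the family $\{\operatorname{tr} g(\mathbf{Q}_{k-1})\}$ has $O(1)$ covariances, vanishing cumulants of order $\geq 3$ in the limit, and a convergent mean $n^{-1}\mathbb{E}\operatorname{tr} g(\mathbf{Q}_{k-1})$ (the last being the first-order freeness statement, which also follows from \cite{V1991} / \cite{MP2013}); consequently $\operatorname{tr} g(\mathbf{Q}_{k-1})$ is asymptotically Gaussian (possibly degenerate). Meanwhile $\mathbf{P}_k$ is deterministic, so trivially $\operatorname{tr} g(\mathbf{P}_k)$ has a convergent normalized mean and degenerate Gaussian fluctuation. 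Thus the pair $(\mathbf{A}_{n-1},\mathbf{B}_{n-1})=(\mathbf{P}_k,\mathbf{Q}_{k-1})$ meets every hypothesis of Proposition 52 of \cite{MP2013}, and we conclude that for any bivariate polynomial $q$, $\operatorname{tr} q(\mathbf{O}_k'\mathbf{P}_k\mathbf{O}_k,\mathbf{Q}_{k-1})$ — in particular $\operatorname{tr} h(\mathbf{Q}_k)$ for any univariate $h$, taking $q(s,t)=h(s+t)$ — has $O(1)$ covariance and asymptotically vanishing cumulants of order $\geq 3$. This closes the induction and yields the theorem.

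A little care is needed on the precise form of the conclusion supplied by Proposition 52 of \cite{MP2013}: one must check that it delivers not merely asymptotic normality of individual traces but the stronger joint statement — $O(1)$ covariance between $\operatorname{tr} q_1(\cdots)$ and $\operatorname{tr} q_2(\cdots)$ and $o(1)$ joint cumulants $\kappa_r$ for $r\geq 3$ across an arbitrary finite collection of polynomials — since that is exactly what the induction hypothesis needs to feed into the next step. The real second order freeness framework of \cite{MP2013} is formulated in terms of fluctuation moments and bounded higher cumulants precisely so that these joint statements propagate; I would cite the relevant combinatorial bound (the genus/annular expansion for Haar orthogonal matrices) that forces the $r$th cumulant of a polynomial trace to be $O(n^{2-r})$, hence $o(1)$ for $r\geq 3$ and $O(1)$ for $r=2$, and is stable under the conjugation-and-add operation.

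The main obstacle is the bookkeeping in invoking \cite{MP2013} at the right level of generality: their Proposition 52 is stated for a single Haar conjugation $\mathbf{O}'\mathbf{A}_n\mathbf{O}$ against a deterministic or independent $\mathbf{B}_n$, and one must verify that at stage $k$ the accumulated matrix $\mathbf{Q}_{k-1}$ genuinely satisfies their ``second order limit distribution'' hypotheses (existence of fluctuation moments, bounded higher cumulants) rather than just a CLT for one test function. Since all the $\mathbf{P}_i$ are deterministic projections with trivially convergent moments and the only randomness entering is the sequence of independent Haar factors, this verification is routine once the hypotheses of Proposition 52 are matched term by term; I would relegate the explicit cumulant bookkeeping and the identification $q(s,t)=h(s+t)$ to the Appendix, together with the computation of the constants $a_n$ and $b_n$ in Theorem \ref{th1} from the first- and second-order fluctuation moments of $\mathbf{Q}$ with $h(x)=x^2$.
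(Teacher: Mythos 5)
Your proposal is correct and follows essentially the same route as the paper: both arguments recursively (inductively) apply Proposition 52 of \cite{MP2013} to the pair $(\mathbf{P}_k,\mathbf{Q}_{k-1})$, verifying at each stage that the accumulated sum retains a real second order limit distribution (convergent normalized means, $O(1)$ covariances, vanishing higher cumulants) so that the hypothesis propagates. Your explicit caution about needing the full joint statement across arbitrary finite families of polynomials, rather than a single-test-function CLT, is precisely the point the paper handles via its conditions 1)--3) and 1')--3').
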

Now if we set $h_i(x)=x^2$ for all $i\in\mathbb{N}$, we can obtain Theorem \ref{th1} by proving the following lemma.
\begin{lem} \label{lem1} Under the above notation, we have
\begin{eqnarray}
\mathbb{E}\text{tr} \mathbf{Q}^2=p+\sum_{\substack{i,j,  i\neq j}} \frac{p_ip_j}{n-1}, \label{0524001}
\end{eqnarray}
and
\begin{eqnarray}
\text{Var}(\text{tr}\mathbf{Q}^2)=4\sum_{\substack{i,j,  i\neq j}} \frac{p_ip_j(n-1-p_i)(n-1-p_j)}{(n-1)^4}+O(\frac{1}{n}). \label{0524002}
\end{eqnarray}
\end{lem}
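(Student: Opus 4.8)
The plan is to compute the two moments directly from the representation $\mathbf{Q}=\sum_{i=1}^k \mathbf{O}_i'\mathbf{P}_i\mathbf{O}_i$ in \eqref{3251}, exploiting that the $\mathbf{O}_i$ are independent Haar orthogonal matrices and that each $\mathbf{P}_i$ is a fixed projection of rank $p_i$. Writing $\mathbf{W}_i:=\mathbf{O}_i'\mathbf{P}_i\mathbf{O}_i$, each $\mathbf{W}_i$ is a random projection of rank $p_i$, so $\mathbf{W}_i^2=\mathbf{W}_i$ and $\mathbb{E}\,\mathbf{W}_i=\frac{p_i}{n-1}\mathbf{I}_{n-1}$. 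For the first moment I would expand $\text{tr}\,\mathbf{Q}^2=\sum_{i,j}\text{tr}(\mathbf{W}_i\mathbf{W}_j)$: the diagonal terms $i=j$ give $\text{tr}\,\mathbf{W}_i=p_i$, summing to $p$; the off-diagonal terms use independence of $\mathbf{O}_i,\mathbf{O}_j$, so $\mathbb{E}\,\text{tr}(\mathbf{W}_i\mathbf{W}_j)=\text{tr}(\mathbb{E}\mathbf{W}_i\cdot\mathbb{E}\mathbf{W}_j)=\frac{p_ip_j}{(n-1)^2}\,\text{tr}\,\mathbf{I}_{n-1}=\frac{p_ip_j}{n-1}$. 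Summing over $i\ne j$ yields \eqref{0524001} exactly, with no error term.

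For the variance in \eqref{0524002} the computation is genuinely heavier. I would write $\text{Var}(\text{tr}\,\mathbf{Q}^2)=\sum_{i,j,i',j'}\big[\mathbb{E}\,\text{tr}(\mathbf{W}_i\mathbf{W}_j)\text{tr}(\mathbf{W}_{i'}\mathbf{W}_{j'})-\mathbb{E}\,\text{tr}(\mathbf{W}_i\mathbf{W}_j)\mathbb{E}\,\text{tr}(\mathbf{W}_{i'}\mathbf{W}_{j'})\big]$ and classify the index quadruples by which labels coincide. Because distinct $\mathbf{W}$'s are independent, only quadruples in which $\{i,j\}$ and $\{i',j'\}$ share at least one label contribute. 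The dominant contribution, of order $O(1)$, comes from the ``matched'' configurations (e.g. $i=i',\,j=j'$ with $i\ne j$, and the symmetric $i=j',\,j=i'$), where after using independence one is left with a covariance of the form $\text{Cov}\big(\text{tr}(\mathbf{W}_i\mathbf{M}),\,\text{tr}(\mathbf{W}_i\mathbf{N})\big)$ for deterministic $\mathbf{M},\mathbf{N}$ proportional to identity (coming from $\mathbb{E}\mathbf{W}_j=\frac{p_j}{n-1}\mathbf{I}$). This reduces everything to the single quantity $\text{Var}\big(\text{tr}(\mathbf{O}_i'\mathbf{P}_i\mathbf{O}_i\,\mathbf{D})\big)$ for a fixed diagonal $\mathbf{D}$, or more precisely to the second moment $\mathbb{E}\,\text{tr}(\mathbf{W}_i)\,\text{tr}(\mathbf{W}_i)$-type terms; the key input is the standard Weingarten / second-moment formula for a single Haar orthogonal conjugation, which gives $\mathbb{E}\big[(\mathbf{O}_i'\mathbf{P}_i\mathbf{O}_i)_{ab}(\mathbf{O}_i'\mathbf{P}_i\mathbf{O}_i)_{cd}\big]$ in terms of $p_i$ and $n-1$, and from there a closed form for the rank-one covariance, which works out to $\frac{2p_i(n-1-p_i)}{(n-1)^2((n-1)^2-1)}$-type expressions whose product with $\frac{p_j(n-1-p_j)}{(n-1)^2}$-type factors, summed over $i\ne j$ and over the two matching patterns, assembles the claimed main term $4\sum_{i\ne j}\frac{p_ip_j(n-1-p_i)(n-1-p_j)}{(n-1)^4}$.

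The remaining configurations — those sharing exactly one label, e.g. $i=i'$ but $j,j'$ all distinct — require more care: here one cannot simply replace $\mathbf{W}_j,\mathbf{W}_{j'}$ by their means inside a product of two traces, but one can still take $\mathbb{E}$ over $\mathbf{O}_j,\mathbf{O}_{j'}$ first (they are independent of everything else in these terms), reducing to a covariance involving only $\mathbf{W}_i$ against deterministic matrices, which is again $O(1/n)$ after the Haar second-moment estimates, because the covariance of $\text{tr}(\mathbf{W}_i\mathbf{M})$ against a constant vanishes while the genuine cross term carries an extra $(n-1)^{-1}$. Collecting the $O(1)$ main term and absorbing all the single-overlap and higher-coincidence contributions into $O(1/n)$ gives \eqref{0524002}. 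I expect the main obstacle to be bookkeeping: correctly enumerating the index patterns, keeping track of the combinatorial multiplicities (the factor $4$), and carrying out the Haar orthogonal second-moment contractions without algebra slips; the conceptual content is entirely in the single-matrix Weingarten calculus, which I would relegate to the Appendix as indicated in the excerpt.
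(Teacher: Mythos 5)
Your overall plan is the same as the paper's: expand $\text{tr}\,\mathbf{Q}^2=\sum_{i,j}\text{tr}(\mathbf{W}_i\mathbf{W}_j)$ with $\mathbf{W}_i=\mathbf{O}_i'\mathbf{P}_i\mathbf{O}_i$, note that only index quadruples with $\{i,j\}\cap\{i',j'\}\neq\emptyset$ contribute to the variance, extract the main term (with its factor $4$) from the two fully matched configurations, show the single‑overlap configurations are $O(1/n)$, and evaluate everything with the degree‑$2$ and degree‑$4$ orthogonal Weingarten formulas (the paper's Lemma~\ref{lem2}). Your derivation of the mean via $\mathbb{E}\mathbf{W}_i=\frac{p_i}{n-1}\mathbf{I}_{n-1}$ and independence is a slightly cleaner packaging of the paper's entry‑level computation and is correct.

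One step of your variance argument is wrong as stated and, taken literally, would annihilate the main term. You claim the matched configuration reduces to $\mathrm{Cov}\bigl(\text{tr}(\mathbf{W}_i\mathbf{M}),\,\text{tr}(\mathbf{W}_i\mathbf{N})\bigr)$ with $\mathbf{M},\mathbf{N}$ deterministic and proportional to the identity, ``coming from $\mathbb{E}\mathbf{W}_j=\frac{p_j}{n-1}\mathbf{I}$''. But $\text{tr}(\mathbf{W}_i\cdot c\mathbf{I})=c\,\text{tr}\,\mathbf{W}_i=c\,p_i$ is a constant, so that covariance is exactly zero; equivalently, $\mathrm{Var}\bigl(\mathbb{E}[\text{tr}(\mathbf{W}_i\mathbf{W}_j)\mid\mathbf{W}_j]\bigr)=\mathrm{Var}\bigl(\tfrac{p_i}{n-1}\text{tr}\,\mathbf{W}_j\bigr)=0$ because $\text{tr}\,\mathbf{W}_j=p_j$ is deterministic. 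You cannot freeze either matrix at its mean: writing $\mathbf{W}_i=\frac{p_i}{n-1}\mathbf{I}+\Delta_i$ with $\text{tr}\,\Delta_i=0$, one gets $\text{tr}(\mathbf{W}_i\mathbf{W}_j)=\frac{p_ip_j}{n-1}+\text{tr}(\Delta_i\Delta_j)$, so the entire variance is $\mathbb{E}\bigl[(\text{tr}(\Delta_i\Delta_j))^2\bigr]=\sum_{a,b,c,d}\mathrm{Cov}\bigl((\mathbf{W}_i)_{ab},(\mathbf{W}_i)_{cd}\bigr)\,\mathrm{Cov}\bigl((\mathbf{W}_j)_{ba},(\mathbf{W}_j)_{dc}\bigr)$, a contraction of two genuine fourth‑moment (in the entries of $\mathbf{O}_i$, resp.\ $\mathbf{O}_j$) tensors — this is exactly how the paper proceeds, keeping both matrices random and summing over the index patterns $s_1,s_2,t_1,t_2$ of Lemma~\ref{lem2}. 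Your subsequent appeal to the formula for $\mathbb{E}\bigl[(\mathbf{O}_i'\mathbf{P}_i\mathbf{O}_i)_{ab}(\mathbf{O}_i'\mathbf{P}_i\mathbf{O}_i)_{cd}\bigr]$ is the right input and does assemble the claimed $4\sum_{i\neq j}\frac{p_ip_j(n-1-p_i)(n-1-p_j)}{(n-1)^4}$, so the defect is in the described reduction rather than in the final answer; but the ``replace $\mathbf{W}_j$ by its mean'' mechanism must be deleted, not just refined.
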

The proof of Lemma \ref{lem1} will also be stated in the Appendix. In the sequel, we prove Theorem \ref{th1} with Lemma \ref{lem1} granted.

\noindent
{\bf Proof of Theorem \ref{th1}}.
Setting $h_i(x)=x^2$ for all $i\in\mathbb{N}$ in Theorem \ref{th2}, we see that all $r$th cumulants of $\text{tr}\mathbf{Q}^2$ tend to $0$ if $r\geq 3$ when $n\to \infty$, which together with Lemma \ref{lem1} implies that
\begin{eqnarray*}
\frac{\text{tr} \mathbf{Q}^2-\mathbb{E}\text{tr} \mathbf{Q}^2}{\sqrt{\text{Var} (\text{tr} \mathbf{Q}^2)}}\Longrightarrow N(0,1).
\end{eqnarray*}
Then Theorem \ref{th1} follows from the definition of $\mathbf{s}(\mathbf{B})$
 directly. \qed

\section*{Appendix}
In this Appendix, we provide the proofs of Theorem \ref{th2} and Lemma \ref{lem1}.

\noindent
{\bf Proof of Theorem \ref{th2}}. In Proposition 52 of \cite{MP2013}, the authors state that if $\{\mathbf{A}_n\}_{n\in\mathbb{N}}$ and $\{\mathbf{B}_n\}_{n\in\mathbb{N}}$ are two independent sequences of random matrices (may be deterministic), where $\mathbf{A}_n$ and $\mathbf{B}_n$ are $n\times n$,  each having a {\emph{real second order limit distribution}}, and $\mathbf{O}$ is supposed to be an $n\times n$ Haar orthogonal matrix independent of $\mathbf{A}_n$ and $\mathbf{B}_n$, then $\mathbf{B}_n$ and $\mathbf{O}'\mathbf{A}_n\mathbf{O}$ are {\emph{asymptotically real second order free}}.  By Definition 30 of \cite{MP2013}, we see that for a single random matrix sequence $\{\mathbf{A}_n\}_{n\in\mathbb{N}}$, the existence of the so-called {\emph{real second order limit distribution}} means that the following three statements hold simultaneously for any given sequence of polynomials $h_1,h_2,h_3,\ldots$ when $n\to\infty$:
\begin{enumerate}
\item[1)] $n^{-1}\mathbb{E}\text{tr} h_1(\mathbf{A}_n)$ converges;
 \item [2)] $\text{Cov}(\text{tr} h_1(\mathbf{A}_n), \text{tr} h_2(\mathbf{A}_n))$ converges (the limit can be $0$);
\item [3)] $\kappa_r(\text{tr} h_1(\mathbf{A}_n),\ldots, \text{tr} h_r(\mathbf{A}_n))=o(1)$ for all $r\geq 3$.
 \end{enumerate}
We stress here, the original definition in \cite{MP2013} is given with a language of non-commutative probability theory. To avoid introducing too many additional notions, we just modify it to be the above 1)-3).
Then by the Definition 33 and Proposition 52 of \cite{MP2013}), one see that if both $\{\mathbf{A}_n\}_{n\in\mathbb{N}}$ and $\{\mathbf{B}_n\}_{n\in\mathbb{N}}$ have real second order limit distributions, we have the following three facts for any given sequences of bivariate polynomials $q_1,q_2,q_3,\ldots$ when  $n\to\infty$:
 \begin{enumerate}
 \item[1')] $n^{-1}\mathbb{E}\text{tr} q_1(\mathbf{B}_n, \mathbf{O}'\mathbf{A}_n\mathbf{O})$ converges;
 \item[2')] $\text{Cov}(\text{tr} q_1(\mathbf{B}_n, \mathbf{O}'\mathbf{A}_n\mathbf{O}), \text{tr} q_2(\mathbf{B}_n, \mathbf{O}'\mathbf{A}_n\mathbf{O}))$ converges;
 \item[3')] $\kappa_r(\text{tr} q_1(\mathbf{B}_n,\mathbf{O}'\mathbf{A}_n\mathbf{O}),\ldots, \text{tr} q_r(\mathbf{B}_n, \mathbf{O}'\mathbf{A}_n\mathbf{O}))=o(1)$  for all $r\geq 3$.
 \end{enumerate}
Here 1') and 2') can be implied by the definitions of the first and second order freeness in \cite{MP2013} respectively, and 3') can be found in the proof of Proposition 52 of \cite{MP2013}, where the authors claim that the proof of Theorem 41 therein is also applicable under the setting of Proposition 52. Note that in \cite{MP2013}, a more concrete rule to determine the limit of  $\text{Cov}(\text{tr} q_1(\mathbf{B}_n, \mathbf{O}'\mathbf{A}_n\mathbf{O}), \text{tr} q_2(\mathbf{B}_n, \mathbf{O}'\mathbf{A}_n\mathbf{O}))$ is given, which can be viewed as the core of the concept of {\emph{real second order freeness}}. However, here we do not need such a concrete rule, thus do not introduce it.

Now we are at the stage of employing Proposition 52 of \cite{MP2013} to prove Theorem \ref{th2}. Recall our objective $\mathbf{Q}$ defined in (\ref{3251}). We start from the case of $k=2$, to wit, we are considering the linear spectral statistics of the random matrix $\mathbf{O}_1'\mathbf{P}_1\mathbf{O}_1+\mathbf{O}_2'\mathbf{P}_2\mathbf{O}_2$. Now, we regard $\mathbf{O}_1'\mathbf{P}_1\mathbf{O}_1$ as $\mathbf{B}_{n-1}$ and $\mathbf{P}_2$ as $\mathbf{A}_{n-1}$, then obviously they both satisfy 1)-3) in the definition of  the existence of the {\emph{real second order limit distribution}}, since the spectrums of $\mathbf{A}_{n-1}$ and $\mathbf{B}_{n-1}$ are both deterministic, noticing they are projection matrices with known ranks. Then 1')-3') immediately imply that $\mathbf{O}_1'\mathbf{P}_1\mathbf{O}_1+\mathbf{O}_2'\mathbf{P}_2\mathbf{O}_2$ also has a real second order limit distribution. Next, adding $\mathbf{O}_3'\mathbf{P}_3\mathbf{O}_3$ to $\mathbf{O}_1'\mathbf{P}_1\mathbf{O}_1+\mathbf{O}_2'\mathbf{P}_2\mathbf{O}_2$, and regarding the latter as $\mathbf{B}_{n-1}$ and $\mathbf{P}_3$ as $\mathbf{A}_{n-1}$, we can use the above discussion again to conclude that $\mathbf{O}_1'\mathbf{P}_1\mathbf{O}_1+\mathbf{O}_2'\mathbf{P}_2\mathbf{O}_2+\mathbf{O}_3'\mathbf{P}_3\mathbf{O}_3$ also possesses a real second order limit distribution. Recursively, we can finally get that $\mathbf{Q}$ has a real second order limit distribution, which implies Theorem \ref{th2}. So we conclude the proof. \qed

It remains to prove Lemma \ref{lem1}. Before commencing the proof, we briefly introduce some technical inputs. Since the trace of a product of matrices can always be expressed in terms of some products of their entries, it is expected that we will need to calculate the quantities of the form
\begin{eqnarray}
\mathbb{E}\mathbf{O}_{i_1j_1}\cdots\mathbf{O}_{i_mj_m},  \label{052701}
\end{eqnarray}
where $\mathbf{O}$ is assumed to be an $N$-dimensional Haar distributed orthogonal matrix, and $\mathbf{O}_{ij}$ is its $(i,j)$th entry. A powerful tool handling this kind of expectation is the so-called {\emph{Weingarten calculus on orthogonal group}}, we refer to the seminal paper of \cite{CS2006}, formula (21) therein. To avoid introducing too many combinatorics notions for Weingarten calculus, we just list some consequence of it for our purpose, taking into account the fact that we will only need to handle the case of $m\leq 4$ in (\ref{052701}) in the sequel. Specifically, we have the following lemma.

\begin{lem}\label{lem2} Under the above notation, we have the following facts for (\ref{052701}), assuming $m\leq 4$ and $\mathbf{i}=(i_1,\ldots,i_m)$ and $\mathbf{j}=(j_1,\ldots,j_m)$.\\\\
1): When $m=2$, $i_1=i_2$, and $j_1=j_2$, we have  $(\ref{052701})=N^{-1}$,\\\\
2): When $m=4$, we have the following results for four sub-cases.
\begin{enumerate}
\item[i):] If $i_1=i_2=i_3=i_4$, $j_1=j_2=j_3=j_4$, we have $(\ref{052701})=3/(N(N+2))$;\\
\item[ii):] If $i_1=i_2=i_3=i_4$, $j_1=j_2\neq j_3=j_4$, we have $(\ref{052701})=1/(N(N+2))$;\\
\item[iii):] If $i_1=i_2\neq i_3=i_4$, $j_1=j_2\neq j_3=j_4$, we have $(\ref{052701})=(N+1)/(N(N-1)(N+2))$;\\
\item[iv):] If $i_1=i_3\neq i_2=i_4$, $j_1=j_2\neq j_3=j_4$, we have $(\ref{052701})=-1/(N(N-1)(N+2))$.
\end{enumerate}
3): Replacing $\mathbf{O}$ by $\mathbf{O}'$, we can obviously switch the roles of $\mathbf{i}$ and $\mathbf{j}$ in 1) and 2). Moreover, any permutation on the indices $\{1,\ldots,m\}$ will not change (\ref{052701}). Any other triple $(m,\mathbf{i},\mathbf{j})$, which can not be transformed  into any case in 1) or 2) via switching the roles of $\mathbf{i}$ and $\mathbf{j}$
 or performing permutations on the indices $\{1,\ldots,m\}$, will drives (\ref{052701}) to be $0$.
 \end{lem}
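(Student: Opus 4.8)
The plan is to give a self-contained derivation of every entry of Lemma~\ref{lem2} using three standard features of the Haar measure on $O(N)$: its invariance under left and right multiplication by any fixed orthogonal matrix (in particular by diagonal sign matrices), the orthonormality relations $\mathbf{O}\mathbf{O}'=\mathbf{O}'\mathbf{O}=\mathbf{I}_N$, and the fact that any single row or column of $\mathbf{O}$ is uniformly distributed on the unit sphere $S^{N-1}$. The last fact follows because, writing $\mathbf{v}=\mathbf{O}e_j$ for a fixed column, left-invariance $R\mathbf{O}\stackrel{d}=\mathbf{O}$ for every fixed $R\in O(N)$ forces $R\mathbf{v}\stackrel{d}=\mathbf{v}$, so the unit vector $\mathbf{v}$ is rotation-invariant in law; rows are handled by transposing. (Alternatively one may invoke formula (21) of \cite{CS2006} and evaluate the orthogonal Weingarten function on the pair partitions of $\{1,\ldots,m\}$ for $m\le 4$; the elementary route below avoids importing that machinery.)

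The first step disposes of part 3) and trims the bookkeeping. Permutation invariance of (\ref{052701}) under relabelling $\{1,\ldots,m\}$ is automatic, since the entries $\mathbf{O}_{i_aj_a}$ are scalars and their product is commutative, while replacing $\mathbf{O}$ by $\mathbf{O}'$ interchanges the roles of $\mathbf{i}$ and $\mathbf{j}$. For the vanishing claim, if some row value occurs an odd number of times among $i_1,\ldots,i_m$, then conjugating by the diagonal sign matrix flipping that row (a Haar-preserving map) multiplies the product by $-1$, so the expectation is $0$; a column sign flip handles odd column multiplicities. Hence for $m\le 4$ only configurations with every row index and every column index of even multiplicity survive, and a short enumeration shows these are exactly case 1) for $m=2$ and, for $m=4$, the multiplicity patterns $(4)\times(4)$, $(4)\times(2,2)$, $(2,2)\times(4)$ and $(2,2)\times(2,2)$, i.e. precisely cases i)--iv) up to transpose and permutation.

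Next I would compute the single-row moments, covering case 1) and cases i), ii). Representing a column as $\mathbf{v}=\bbg/\|\bbg\|$ with $\bbg=(g_1,\ldots,g_N)$ standard Gaussian and using that the direction $\mathbf{v}$ is independent of $\|\bbg\|$, one gets $\mathbb{E}[g_a^2]=\mathbb{E}[\|\bbg\|^2]\,\mathbb{E}[v_a^2]$, $\mathbb{E}[g_a^4]=\mathbb{E}[\|\bbg\|^4]\,\mathbb{E}[v_a^4]$, and $\mathbb{E}[g_a^2g_b^2]=\mathbb{E}[\|\bbg\|^4]\,\mathbb{E}[v_a^2v_b^2]$ for $a\neq b$. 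Since $\mathbb{E}[\|\bbg\|^2]=N$, $\mathbb{E}[\|\bbg\|^4]=N(N+2)$, and $\mathbb{E}[g_a^2]=1$, $\mathbb{E}[g_a^4]=3$, $\mathbb{E}[g_a^2g_b^2]=1$, this yields $\mathbb{E}[\mathbf{O}_{ij}^2]=1/N$ (case 1), $\mathbb{E}[\mathbf{O}_{ij}^4]=3/(N(N+2))$ (case i), and $\mathbb{E}[\mathbf{O}_{ij}^2\mathbf{O}_{ij'}^2]=1/(N(N+2))$ for $j\neq j'$ (case ii). Transposing $\mathbf{O}$ gives the same-column value $C:=\mathbb{E}[\mathbf{O}_{ij}^2\mathbf{O}_{i'j}^2]=1/(N(N+2))$ for $i\neq i'$, which is all that is needed below.

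The substantive step, and the one I expect to be the main obstacle, is cases iii) and iv): two distinct rows are involved, and since the rows of $\mathbf{O}$ are constrained to be orthonormal they are not independent, so the sphere computation no longer suffices; the remedy is to use the orthonormality relations directly. For case iv), set $B:=\mathbb{E}[\mathbf{O}_{ij}\mathbf{O}_{ij'}\mathbf{O}_{i'j}\mathbf{O}_{i'j'}]$ with $i\neq i'$, $j\neq j'$, and start from $\sum_k\mathbf{O}_{ik}\mathbf{O}_{i'k}=0$; squaring and taking expectations gives $\sum_{k,l}\mathbb{E}[\mathbf{O}_{ik}\mathbf{O}_{il}\mathbf{O}_{i'k}\mathbf{O}_{i'l}]=0$, and separating the diagonal $k=l$ (contributing $N\,C$) from $k\neq l$ (contributing $N(N-1)B$) yields $N\,C+N(N-1)B=0$, hence $B=-1/(N(N-1)(N+2))$. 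For case iii), set $A:=\mathbb{E}[\mathbf{O}_{ij}^2\mathbf{O}_{i'j'}^2]$ with $i\neq i'$, $j\neq j'$, multiply $\sum_k\mathbf{O}_{ik}^2=1$ by $\mathbf{O}_{i'j'}^2$ and take expectations to get $\sum_k\mathbb{E}[\mathbf{O}_{ik}^2\mathbf{O}_{i'j'}^2]=1/N$; isolating the $k=j'$ term (equal to $C$) from the remaining $N-1$ terms (each equal to $A$) gives $C+(N-1)A=1/N$, whence $A=(N+1)/(N(N-1)(N+2))$. These match iii) and iv), completing the lemma; the only delicate point is the careful separation of diagonal versus off-diagonal index contributions in the two sum rules, which is routine once the even-multiplicity reduction of part 3) is in hand.
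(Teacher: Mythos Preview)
Your proposal is correct. The paper itself does not give a proof of this lemma: it merely cites formula (21) of \cite{CS2006} (the orthogonal Weingarten calculus) and lists the values as ``some consequence of it for our purpose,'' without carrying out the pair-partition computation.

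Your route is therefore genuinely different and more elementary. Where the paper appeals to the full Weingarten machinery on $O(N)$, you use only three ingredients: sign-flip invariance to obtain the parity vanishing in part 3), the Gaussian representation $\mathbf{v}=\bbg/\|\bbg\|$ for the single-row moments in cases 1), i), ii), and the two sum rules coming from $\sum_k \mathbf{O}_{ik}\mathbf{O}_{i'k}=0$ and $\sum_k \mathbf{O}_{ik}^2=1$ to solve for the two-row quantities in iii) and iv). The trade-off is the expected one: the paper's approach is shorter to state (one citation) and generalises immediately to arbitrary $m$, whereas your argument is self-contained and explains the specific numerical values, but would become unwieldy for $m\geq 6$. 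For the stated range $m\le 4$ your derivation is clean and complete; the only place one must be slightly careful is the enumeration in part 3) that, for the $(2,2)\times(2,2)$ multiplicity pattern, every alignment of row-pairs against column-pairs reduces to case iii) or case iv) under a permutation of $\{1,\ldots,4\}$, and you have handled this correctly.
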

 With Lemma \ref{lem2} at hand, we can prove Lemma \ref{lem1} in the sequel.

 \noindent
{\bf Proof of Lemma \ref{lem1}}.
At first, we verify (\ref{0524001}). Note that by definition,
\begin{eqnarray*}
\mathbb{E}\text{tr} \mathbf{Q}^2&=&\sum_{i=1}^k\mathbb{E}\text{tr}(\mathbf{O}_i'\mathbf{P}_i\mathbf{O}_i)^2+\sum_{\substack{i,j,i\neq j}}\mathbb{E}\text{tr}(\mathbf{O}_i'\mathbf{P}_i\mathbf{O}_i\cdot \mathbf{O}_j'\mathbf{P}_j\mathbf{O}_j) \nonumber\\
&=& p+\sum_{\substack{i,j,  i\neq j}}\mathbb{E}\text{tr}(\mathbf{O}_i'\mathbf{P}_i\mathbf{O}_i\cdot \mathbf{O}_j'\mathbf{P}_j\mathbf{O}_j).
\end{eqnarray*}
Let $\mathbf{u}_i(\ell)$ be the $\ell$th column of $\mathbf{O}_i'$ and $\mathbf{u}_i(\ell,s)$ be the $s$th coefficient of $\mathbf{u}_i(\ell)$, i.e. the $(\ell,s)$th entry of $\mathbf{O}_i$, for all $i\in\llbracket k \rrbracket$. Then for $i\neq j$ we have
\begin{eqnarray}
\mathbb{E}\text{tr}(\mathbf{O}_i'\mathbf{P}_i\mathbf{O}_i\cdot \mathbf{O}_j'\mathbf{P}_j\mathbf{O}_j)&=&\mathbb{E}\sum_{m=1}^{p_i}\sum_{\ell=1}^{p_j}\text{tr} \mathbf{u}_i(m)\mathbf{u}_i'(m)\mathbf{u}_j(\ell)\mathbf{u}_j'(\ell)\nonumber\\
&=& \sum_{m=1}^{p_i}\sum_{\ell=1}^{p_j}\sum_{s}\sum_{t} \mathbb{E}\mathbf{u}_i(m,s)\mathbf{u}_i(m,t)\cdot\mathbb{E}\mathbf{u}_j(\ell,s)\mathbf{u}_j(\ell,t)\nonumber\\
&=&  \sum_{m=1}^{p_i}\sum_{\ell=1}^{p_j}\sum_{s} \mathbb{E}(\mathbf{u}_i(m,s))^2\mathbb{E}(\mathbf{u}_j(\ell,s))^2=\frac{p_ip_j}{n-1}. \label{052702}
\end{eqnarray}
Here, in the third step above we used 3) of Lemma \ref{lem2} to discard the terms with $s\neq t$, while in the last step we used 1) of Lemma \ref{lem2}.
Therefore, we have
\begin{eqnarray}
\mathbb{E}\text{tr} (\sum_{i=1}^k\mathbf{O}_i'\mathbf{P}_i\mathbf{O}_i)^2=p+\sum_{\substack{i,j,i\neq j}} \frac{p_ip_j}{n-1},\label{mean}
\end{eqnarray}
Now we calculate $\text{Var} (\text{tr} \mathbf{Q}^2)$ as follows. Note that we have
\begin{eqnarray*}
\text{Var}(\text{tr} \mathbf{Q}^2)&=&\mathbb{E}(\text{tr} (\sum_{i=1}^k\mathbf{O}_i'\mathbf{P}_i\mathbf{O}_i)^2)^2-(\mathbb{E}\text{tr}(\sum_{i=1}^k\mathbf{O}_i'\mathbf{P}_i\mathbf{O}_i)^2)^2\nonumber\\
&=& \mathbb{E}\bigg(\sum_{\substack{i,j,  i\neq j}}\text{tr}(\mathbf{O}_i'\mathbf{P}_i\mathbf{O}_i\cdot \mathbf{O}_j'\mathbf{P}_j\mathbf{O}_j) \bigg)^2-\bigg(\mathbb{E}\sum_{\substack{i,j,  i\neq j}}\text{tr}(\mathbf{O}_i'\mathbf{P}_i\mathbf{O}_i\cdot \mathbf{O}_j'\mathbf{P}_j\mathbf{O}_j)\bigg)^2\nonumber\\
&=&\sum_{\substack{i,j,  i\neq j}} \sum_{\substack{m,\ell,  m\neq \ell}} \text{Cov}(\text{tr}(\mathbf{O}_i'\mathbf{P}_i\mathbf{O}_i\cdot \mathbf{O}_j'\mathbf{P}_j\mathbf{O}_j), \text{tr}(\mathbf{O}_m'\mathbf{P}_m\mathbf{O}_m\cdot \mathbf{O}_\ell'\mathbf{P}_\ell\mathbf{O}_\ell))\nonumber\\
&=& \sum_{\substack{i,j,m,\ell\\  i\neq j,m\neq \ell\\ \{i,j\}\cap\{m,\ell\}\neq \emptyset}} \text{Cov}(\text{tr}(\mathbf{O}_i'\mathbf{P}_i\mathbf{O}_i\cdot \mathbf{O}_j'\mathbf{P}_j\mathbf{O}_j), \text{tr}(\mathbf{O}_m'\mathbf{P}_m\mathbf{O}_m\cdot \mathbf{O}_\ell'\mathbf{P}_\ell\mathbf{O}_\ell)).
\end{eqnarray*}
In the sequel, we briefly write
\begin{eqnarray*}
\text{Cov}((i,j),(m,\ell)):=\text{Cov}(\text{tr}(\mathbf{O}_i'\mathbf{P}_i\mathbf{O}_i\cdot \mathbf{O}_j'\mathbf{P}_j\mathbf{O}_j), \text{tr}(\mathbf{O}_m'\mathbf{P}_m\mathbf{O}_m\cdot \mathbf{O}_\ell'\mathbf{P}_\ell\mathbf{O}_\ell))
\end{eqnarray*}
Note that the summation in the last step above can be decomposed into the following six cases.
\begin{eqnarray*}
&&1: m=i,\quad \ell=j;\qquad \hspace{2ex}2: m=j,\quad \ell=i;\nonumber\\
&&3: m=i,\quad \ell\neq i, j;\qquad 4: \ell=i,\quad m\neq i,j;\nonumber\\
&& 5: m=j,\quad \ell\neq i,j;\qquad 6: \ell=j,\quad m\neq i,j.
\end{eqnarray*}
Now given $i,j$, we decompose the summation over $m,\ell$ according to the above $6$ cases and denote the sum restricted on these cases by $\sum_{\alpha(i,j)},\alpha(i,j)=1,\ldots,6$ respectively. Therefore,
\begin{eqnarray}
\text{Var}(\text{tr} \mathbf{Q}^2)=\sum_{\substack{i,j,  i\neq j}} \sum_{\alpha(i,j)=1}^6 \sum\nolimits_{\alpha(i,j)} \text{Cov}((i,j),(m,\ell)) \label{052705}
\end{eqnarray}
Now by definition we have
\begin{eqnarray*}
&&\sum_{\substack{i,j,  i\neq j}} \sum\nolimits_{\alpha(i,j)} \text{Cov}((i,j),(m,\ell))=\sum_{i}\sum_{j\neq i} \text{Cov}((i,j),(i,j)),\qquad \hspace{1ex}\alpha=1,2,\nonumber\\
&&\sum_{\substack{i,j,  i\neq j}} \sum\nolimits_{\alpha(i,j)}\text{Cov}((i,j),(m,\ell))=\sum_{i}\sum_{j\neq i}\sum_{\ell\neq i,j} \text{Cov}((i,j),(i,\ell)),\quad \alpha=3,4,\nonumber\\
&&\sum_{\substack{i,j,  i\neq j}} \sum\nolimits_{\alpha(i,j)} \text{Cov}((i,j),(m,\ell))=\sum_{i}\sum_{j\neq i}\sum_{\ell\neq i,j} \text{Cov}((i,j),(j,\ell)), \quad\alpha=5,6.\nonumber\\
\end{eqnarray*}
Now note that for $i\neq j$
\begin{eqnarray*}
\text{Cov}((i,j),(i,j))&=&\mathbb{E}(\text{tr}(\mathbf{O}_i'\mathbf{P}_i\mathbf{O}_i\cdot \mathbf{O}_j'\mathbf{P}_j\mathbf{O}_j))^2-(\mathbb{E}\text{tr}(\mathbf{O}_i'\mathbf{P}_i\mathbf{O}_i\cdot \mathbf{O}_j'\mathbf{P}_j\mathbf{O}_j))^2\nonumber\\
&=&\mathbb{E}(\text{tr}(\mathbf{O}_i'\mathbf{P}_i\mathbf{O}_i\cdot \mathbf{O}_j'\mathbf{P}_j\mathbf{O}_j))^2-(\frac{p_ip_j}{n-1})^2,
\end{eqnarray*}
where the last step follows from (\ref{052702}).
Moreover, we have
\begin{eqnarray*}
&&\mathbb{E}(\text{tr}(\mathbf{O}_i'\mathbf{P}_i\mathbf{O}_i\cdot \mathbf{O}_j'\mathbf{P}_j\mathbf{O}_j))^2\nonumber\\
&&=\sum_{m_1,m_2=1}^{p_i}\sum_{\ell_1,l_2=1}^{p_j}\mathbb{E}\mathbf{u}_i'(m_1)\mathbf{u}_j(\ell_1)\mathbf{u}_j'(\ell_1)\mathbf{u}_i(m_1)\mathbf{u}_i'(m_2)\mathbf{u}_j(\ell_2)\mathbf{u}_j'(\ell_2)\mathbf{u}_i(m_2)\nonumber\\
&&=\sum_{s_1,s_2,t_1,t_2}\bigg{[}\sum_{m_1,m_2=1}^{p_i}\mathbb{E}\mathbf{u}_i(m_1,s_1)\mathbf{u}_i(m_1,t_1)\mathbf{u}_i(m_2,s_2)\mathbf{u}_i(m_2,t_2)\bigg{]}  \nonumber\\
&&\hspace{15ex}\times\bigg{[}\sum_{\ell_1,\ell_2=1}^{p_j}\mathbb{E}\mathbf{u}_j(\ell_1,s_1)\mathbf{u}_j(\ell_1,t_1)\mathbf{u}_j(\ell_2,s_2)\mathbf{u}_j(\ell_2,t_2)\bigg{]}.
\end{eqnarray*}
To calculate the above expectation, we need to use Lemma \ref{lem2} again. In light of 3) of Lemma \ref{lem2},  it suffices to consider the following four cases
\begin{eqnarray*}
&&1: s_1=s_2=t_1=t_2,\quad 2: s_1=s_2\neq t_1=t_2\nonumber\\
&&3: s_1=t_1\neq s_2=t_2,\quad 4: s_1=t_2\neq s_2=t_1.
\end{eqnarray*}
Through detailed but elementary calculation, with the aid of Lemma \ref{lem2}, we can finally obtain that for $i\neq j$,
\begin{eqnarray*}
\text{Cov}((i,j),(i,j))=\frac{2p_ip_j(n-1-p_i)(n-1-p_j)}{(n-1)^4}+O(\frac{1}{n}).
\end{eqnarray*}
Moreover, analogously, when $i,j,\ell$ are mutually distinct, we can get
\begin{eqnarray*}
\text{Cov}((i,j),(i,\ell))= \text{Cov}((i,j),(j,\ell))=O(\frac{1}{n})
\end{eqnarray*}
by using Lemma \ref{lem2}. Here we just omit the details of the calculation.
Consequently, by (\ref{052705}) one has
\begin{eqnarray}
\text{Var}(\text{tr} \mathbf{Q}^2)=\sum_{\substack{i,j,  i\neq j}}\frac{4p_ip_j(n-1-p_i)(n-1-p_j)}{(n-1)^4}+O(\frac{1}{n}). \label{variance}
\end{eqnarray}
Thus we conclude the proof. \qed


\begin{thebibliography}{100}
\bibitem[Anderson(2003)]{Anderson03I}
Anderson, T. (2003).
\newblock {\em {An introduction to multivariate statistical analysis. Third
  Edition}}.
\newblock Wiley New York.

\bibitem[Bai et al.(2009)]{BJYZ2009}
Bai, Z. D., Jiang, D. D., Yao, J. F. and Zheng, S. R. (2009). Corrections to LRT on large-dimensional covariance matrix by RMT. {\emph{Ann. Statist.}}\textbf{37(6B)}, 3822-3840.
\bibitem[Bai and Silverstein (2004)]{BS2004}
Bai, Z. D., Silverstein, J. W. (2004). CLT for linear spectral statistics of large-dimensional sample covariance matrices. \emph{Ann. Probab.}, \textbf{32(1A)}, 553-605.
\bibitem[Cai and Jiang(2011)]{CJ2011}
Cai, T. T. and Jiang, T. F.(2011). Limiting laws of coherence of random matrices with applications to testing covariance structure and construction of compressed sensing matrices. {\emph{Ann. Statist.}} \textbf{39(3)}, 1496-1525.
\bibitem[Collins et al.(2007)]{CMSS2007}
Collins, B., Mingo, J. A.,  \'{S}niady, P. and Speicher, R.(2007). Second order freeness and fluctuations of Random Matrices: III. Higher order freeness and free cumulants, {\emph{Doc. Math.}}, {\bf 12}, 1-70.

\bibitem[Collins and \'{S}niady(2006)]{CS2006}
Collins, B. and \'{S}niady, P. (2006). Integration with respect to the Haar measure on unitary, orthogonal and symplectic group. {\emph{Commun. Math. Phys.}} {\textbf 264}, 773-795.

%

\bibitem[Jiang, Bai and Zheng(2013)]{JBZ2013}
Jiang D. D., Bai Z. D., Zheng S. R. (2013). Testing the independence of sets of large-dimensional variables. {\emph {Science China Mathematics}}, \textbf{56(1)}, 135-147.
\bibitem[Jiang and Yang(2013)]{JY2013}
Jiang T. F. and Yang F. (2013). Central limit theorems for classical likelihood ratio tests for high-dimensional normal distributions. {\emph {Ann. Statist.}}, \textbf{41(4)},2029-2074.
\bibitem[Johansson (1998)]{Johansson1998}
Johansson, K. (1998). On fluctuations of eigenvalues of random Hermitian matrices. {\emph{Duke Math. J.}}, \textbf{91}, 151-204.
\bibitem[Mingo and Popa (2013)]{MP2013}
Mingo J. A., Popa M. (2013). Real second order freeness and Haar orthogonal matrices. {\emph {J. Math. Phys.}},  \textbf{54(5)}, 051701.
\bibitem[Mingo and Speicher(2006)]{MS2006}
Mingo, J. A., Speicher, R. (2006). Second order freeness and fluctuations of random matrices: I. Gaussian and Wishart matrices and cyclic Fock spaces. {\emph{Journal of Functional Analysis}},  {\bf 235}(1), 226-270.
\bibitem[Mingo, \'{S}niady and Speicher(2007)]{MSS2007}
Mingo, J. A., \'{S}niady, P. and Speicher. R. (2007). Second order freeness and fluctuations of random matrices: II. Unitary random matrices. {\emph{Advances in Mathematics}}, {\bf 209(1)}, 212-240.
\bibitem[Muirhead(1982)]{Mu1982}
Muirhead, R. J. (1982). Aspects of Multivariate Statistical Theory. {\emph {Wiley, New York}}.

\bibitem[Johnstone(2001)]{Johnstone2001}
Johnstone, I. M.(2001). On the distribution of the largest eigenvalue in principal components
analysis. \emph{Ann. Statist.} \textbf{29}, 295-327.

\bibitem[Ledoit and Wolf(2002)]{LW2002}
Ledoit, O. and Wolf, M.(2002). Some hypothesis tests for the covariance matrix when the
dimension is large compared to the sample size. \emph{Ann. Statist.} \textbf{30}, 1081-1102.



\bibitem[Lytova and Pastur(2009)]{LP2009}
Lytova, A., Pastur, L.. Central limit theorem for linear eigenvalue statistics of random matrices with independent entries. \emph{Ann. Probab.}, \textbf{37(5)}, 1778-1840.
\bibitem[Pearson(1900)]{Pearson00} Pearson, Karl (1900). On the criterion that a given system of deviations from the probable in the case of a correlated system of variables is such that it can be reasonably supposed to have arisen from random sampling. \emph{Philosophical Magazine Series 5} \textbf{50(302)}, 157-175.

\bibitem[Rama(2001)]{Rama(01)}
  Rama, C. (2001). Empirical properties of asset returns: stylized facts and statistical issues.
{\emph {Quant. Finance}} 1, 223-236.


\bibitem[Rao et al.(2008)] {RMSE2008}
Rao, N. R., Mingo, J. A., Speicher, R. and Edelman, A. (2008). Statistical eigen-inference from large Wishart matrices. {\emph {Ann.  Statist.}}, 2850-2885.
\bibitem[Redelmeier(2013)]{Redelmeier2013}
Redelmeier, E. (2013). Real Second-Order Freeness and the Asymptotic Real Second-Order Freeness of Several Real Matrix Models. {\emph{Int. Math. Res. Notices}}, doi: 10.1093/imrn/rnt043
\bibitem[Schott(2005)]{S2005}
Schott, J. R.(2005). Testing for complete independence in high dimensions. \emph{Biometrika} \textbf{92}, 951-956.

\bibitem[Shcherbina(2011)]{Shcherbina2011}
Shcherbina, M.(2011). Central limit theorem for linear eigenvalue statistics of the Wigner and sample covariance
random matrices. {\emph{Math. Phys. Anal. Geom.}}, {\bf7(2)}, 176-192.
\bibitem[Srivastava (2005)]{Srivastava05S}
Srivastava, M. S. (2005)
\newblock {Some Tests Concerning the Covariance Matrix in High Dimensional
  Data}.
\newblock {\em Journal of the Japan Statistical Society}, 35(2):251--272.

 \bibitem[Sinai and Soshnikov(1998)]{SS1998}
 Sinai, Y., Soshnikov, A. (1998). Central limit theorem for traces of large random symmetric matrices with independent matrix elements. {\emph{Bol. Soc. Brasil. Mat.}}\textbf{29(1)}, 1-24.

\bibitem[Voiculescu(1991)]{V1991}
Voiculescu, D. (1991).  Limit laws for random matrices and free products. {\emph{Invent. Math.}}, \textbf{104(1)}, 201-220.
\bibitem[Wilks(1935)]{Wilks1935}
Wilks, S. S. (1935).  On the independence of k sets of normally distributed statistical variables. {\emph {Econometrica}} \textbf{3}, 309-326.

\end{thebibliography}

\end{document}